\newtheorem{prethm}{{\bf  Theorem}}
\newenvironment{thm}{\begin{prethm}{\hspace{-0.5
               em}{\bf .}}}{\end{prethm}}
\newtheorem{prepro}{{\bf  Theorem}}
\newenvironment{pro}{\begin{prepro}{\hspace{-0.5
               em}{\bf .}}}{\end{prepro}}
\newtheorem{precor}{{\bf  Corollary}}
\newenvironment{cor}{\begin{precor}{\hspace{-0.5
               em}{\bf .}}}{\end{precor}}
\newtheorem{preconj}{{\bf  Conjecture}}
\newtheorem{preremark}{{\bf  Remark}}
\newenvironment{remark}{\begin{preremark}{\hspace{-0.5
               em}{\bf .}}}{\end{preremark}}
\newtheorem{prelem}{{\bf  Lemma}}
\newenvironment{lem}{\begin{prelem}{\hspace{-0.5
               em}{\bf .}}}{\end{prelem}}
\newtheorem{preexample}{{\bf  Example}}
\newenvironment{exa}{\begin{preexample}{\hspace{-0.5
               em}{\bf .}}}{\end{preexample}}
\newtheorem{preproof}{{\bf  Proof.}}
\newenvironment{proof}[1]{\begin{preproof}{\rm
               #1}\hfill{$\Box$}}{\end{preproof}}
\title{\large \bf On the intersection graph of ideals of a commutative ring
\thanks
{{\it Keywords}: Intersection graph, perfect graph, clique number, chromatic number, diameter, girth. \newline
{\indent ~ {2010{ \it Mathematics Subject Classification}: 05C15, 05C17, 05C69, 13A99, 13C99.}}\newline
}
}
\author{\bf\small\sc F. Heydari\\
{\footnotesize {\em Department of
 Mathematics, Karaj Branch,
Islamic Azad University, Karaj, Iran}}\\
{\footnotesize {\em  f-heydari@kiau.ac.ir }}
}
\date{}
\begin{document}
\maketitle

\begin{abstract}
Let $R$ be a commutative ring and $M$ be an $R$-module, and let $I(R)^*$ be the set of all non-trivial ideals of $R$.
The $M$-intersection graph of ideals of $R$, denoted by $G_M(R)$,
is a graph with the vertex set $I(R)^*$, and two distinct vertices $I$ and $J$ are adjacent if and only if
$IM\cap JM\neq 0$. For every multiplication $R$-module $M$, the diameter and the girth of $G_M(R)$ are determined. Among other results, we prove that if $M$ is a faithful $R$-module and
the clique number of $G_M(R)$ is finite, then $R$ is a semilocal ring. We denote the $\mathbb{Z}_n$-intersection graph of ideals of the ring $\mathbb{Z}_m$ by $G_n(\mathbb{Z}_m)$, where $n,m\geq 2$ are integers and $\mathbb{Z}_n$ is a $\mathbb{Z}_m$-module. We determine the values of $n$ and $m$ for which $G_n(\mathbb{Z}_m)$ is perfect. Furthermore, we derive a sufficient condition for $G_n(\mathbb{Z}_m)$ to be weakly perfect.
\end{abstract}

\section{Introduction}
Let $R$ be a commutative ring, and $I(R)^*$ be the set of all non-trivial ideals of $R$.
There are many papers on assigning a graph to a ring $R$, for instance see [1--4].
Also the intersection graphs of some algebraic structures such as groups, rings and modules have been studied by several authors, see \cite{akbtaval, india, Cs}.
 In \cite{india}, the intersection graph of ideals of $R$, denoted by $G(R)$, was introduced as the graph with vertices $I(R)^*$ and for distinct $I,J\in I(R)^*$,
 the vertices $I$ and $J$ are adjacent if and only if $I\cap J\neq 0$. Also in \cite{akbtaval}, the intersection
graph of submodules of an $R$-module $M$, denoted by $G(M)$, is defined to be the graph whose vertices are
the non-trivial submodules of $M$ and two distinct vertices are
adjacent if and only if they have non-zero intersection.
 In this paper, we generalize $G(R)$ to $G_M(R)$, the \textit{$M$-intersection graph of ideals} of $R$, where $M$ is an $R$-module.\\
\indent Throughout the paper, all rings are commutative with non-zero identity and all modules are unitary.
A module is called a {\it uniform} module if the intersection of any two non-zero submodules is non-zero. An $R$-module $M$ is said to be a \textit{multiplication} module if every submodule of $M$ is of the form $IM$, for some ideal $I$ of $R$.
The \textit{annihilator} of $M$ is denoted by $ann(M)$. The module $M$ is called a \textit{faithful} $R$-module if $ann(M)=0$. By a non-trivial submodule of
$M$, we mean a non-zero proper submodule of $M$. Also, $J(R)$ denotes
the Jacobson radical of $R$ and $Nil(R)$ denotes the ideal of all nilpotent elements of $R$. By $\mathrm{Max}(R)$, we denote the set of all maximal ideals of $R$.
A ring having only finitely many maximal ideals is said to be a \textit{semilocal} ring.
As usual, $\mathbb{Z}$ and $\mathbb{Z}_n$ will denote the integers and the integers modulo $n$, respectively.\\
\indent A graph in which any two distinct vertices are adjacent is called a \textit{complete graph}. We denote the complete graph on $n$ vertices by $K_n$. A \textit{null graph} is a graph containing no edges. Let $G$ be a graph. The {\it complement} of $G$ is denoted by $\overline{G}$. The set of vertices and the set of edges of $G$ are
denoted by $V(G)$ and $E(G)$, respectively. A subgraph $H$ of $G$ is said to be an \textit{induced subgraph} of $G$ if it has exactly the edges that appear
in $G$ over $V(H)$. Also, a subgraph $H$
of $G$ is called a \textit{spanning subgraph} if $V(H)=V(G)$. Suppose that $x,y\in V(G)$. We denote by $deg(x)$ the degree of a vertex $x$ in $G$. A {\it regular graph} is a graph where each vertex has the same degree. We recall
that a \textit{walk} between $x$ and $y$ is a sequence $x=v_0$ --- $v_1$ --- $\cdots$ --- $v_k=y$ of vertices of $G$ such that for every $i$ with $1\leq i \leq k$,
the vertices $v_{i-1}$ and $v_i$ are adjacent. A \textit{path} between $x$ and $y$ is a walk between $x$ and $y$ without repeated vertices. We say that $G$ is
\textit{connected} if there is a path between any two distinct
 vertices of $G$.
For vertices $x$ and $y$ of $G$, let $d(x,y)$ be the length of a shortest path from $x$ to $y$ ($d(x,x)=0$ and $d(x,y)=\infty$ if there is no path between $x$
and $y$). The \textit{diameter} of $G$, $diam(G)$, is the supremum of the set $\{d(x,y) : x \ \hbox{and} \ y \ \hbox{are vertices of} \ G\}$.
The \textit{girth} of $G$, denoted by $gr(G)$, is the length of a shortest cycle in $G$ ($gr(G)=\infty$ if $G$ contains no cycles).
A \textit{clique} in $G$ is a set of pairwise adjacent vertices and the number of vertices in the largest clique of $G$,
denoted by $\omega(G)$, is called the \textit{clique number} of $G$. The \textit{chromatic number} of $G$, $\chi(G)$, is the minimal number of colors which can be assigned to the vertices of $G$ in such a way that every two adjacent vertices have different colors. A graph $G$ is {\it perfect} if for every induced subgraph $H$ of $G$, $\chi(H)=\omega(H)$. Also, $G$ is called {\it weakly perfect} if $\chi(G)=\omega(G)$.\\
\indent In the next section, we introduce the $M$-intersection graph of ideals of $R$, denoted by $G_M(R)$, where $R$ is a commutative ring and $M$ is a
non-zero $R$-module. It is shown that for every multiplication $R$-module $M$, $diam(G_M(R))\in \{0,1,2,\infty\}$ and $gr(G_M(R))\in \{3,\infty\}$. Among other results, we prove that if $M$ is a faithful $R$-module and
$\omega(G_M(R))$ is finite, then $|\mathrm{Max}(R)|\leq \omega(G_M(R))+1$ and $J(R)=Nil(R)$. In the last section, we consider the $\mathbb{Z}_n$-intersection graph of ideals of $\mathbb{Z}_m$, denoted by $G_n(\mathbb{Z}_m)$, where $n,m\geq 2$ are integers and $\mathbb{Z}_n$ is a $\mathbb{Z}_m$-module. We show that $G_n(\mathbb{Z}_m)$ is a perfect graph if and only if $n$ has at most four distinct prime divisors. Furthermore, we derive a sufficient condition for $G_n(\mathbb{Z}_m)$ to be weakly perfect. As a corollary, it is shown that the intersection graph of ideals of $\mathbb{Z}_m$ is weakly perfect, for every integer $m\geq 2$.

\section{The $M$-intersection graph of ideals of $R$}

In this section, we introduce the $M$-intersection graph of ideals of $R$ and study its basic properties.\\

\noindent{\bf Definition.}
{\rm
Let $R$ be a commutative ring and $M$ be a non-zero $R$-module. The \textit{$M$-intersection graph of ideals} of $R$, denoted by $G_M(R)$, is the
graph with vertices $I(R)^*$ and two distinct vertices $I$ and $J$ are adjacent if and only if $IM\cap JM\neq 0$.\\
}

Clearly, if $R$ is regarded as a module over itself, that is, $M=R$, then the $M$-intersection graph of ideals of $R$ is exactly the same as the intersection
graph of ideals of $R$. Also, if $M$ and $N$ are two isomorphic $R$-modules, then $G_M(R)$ is the same as $G_N(R)$.

\begin{exa}
{\rm
Let $R=\mathbb{Z}_{12}$. Then we have the following graphs.
\begin{center}
    \begin{tikzpicture}
        \GraphInit[vstyle=Classic]
        \Vertex[x=1,y=0,style={black,minimum size=3pt},LabelOut=true,Lpos=270,L=$4\mathbb{Z}_{12}$]{4}
        \Vertex[x=1,y=1,style={black,minimum size=3pt},LabelOut=true,Lpos=90,L=$2\mathbb{Z}_{12}$]{2}
        \Vertex[x=2.3,y=0,style={black,minimum size=3pt},LabelOut=true,Lpos=270,L=$6\mathbb{Z}_{12}$]{6}
        \Vertex[x=2.3,y=1,style={black,minimum size=3pt},LabelOut=true,Lpos=90,L=$3\mathbb{Z}_{12}$]{3}
        \Edges(2,3)
        \Edges(6,3)
        \Edges(2,6)
        \Edges(2,4)

    \end{tikzpicture}
\hspace{1cm}
    \begin{tikzpicture}
        \GraphInit[vstyle=Classic]
        \Vertex[x=1,y=0,style={black,minimum size=3pt},LabelOut=true,Lpos=270,L=$4\mathbb{Z}_{12}$]{4}
        \Vertex[x=1,y=1,style={black,minimum size=3pt},LabelOut=true,Lpos=90,L=$2\mathbb{Z}_{12}$]{2}
        \Vertex[x=2.3,y=0,style={black,minimum size=3pt},LabelOut=true,Lpos=270,L=$6\mathbb{Z}_{12}$]{6}
        \Vertex[x=2.3,y=1,style={black,minimum size=3pt},LabelOut=true,Lpos=90,L=$3\mathbb{Z}_{12}$]{3}

    \end{tikzpicture}
\hspace{1cm}
    \begin{tikzpicture}
        \GraphInit[vstyle=Classic]
        \Vertex[x=1,y=0,style={black,minimum size=3pt},LabelOut=true,Lpos=270,L=$4\mathbb{Z}_{12}$]{4}
        \Vertex[x=1,y=1,style={black,minimum size=3pt},LabelOut=true,Lpos=90,L=$2\mathbb{Z}_{12}$]{2}
        \Vertex[x=2.3,y=0,style={black,minimum size=3pt},LabelOut=true,Lpos=270,L=$6\mathbb{Z}_{12}$]{6}
        \Vertex[x=2.3,y=1,style={black,minimum size=3pt},LabelOut=true,Lpos=90,L=$3\mathbb{Z}_{12}$]{3}
        \Edges(2,4)

    \end{tikzpicture}
\hspace{1cm}
    \begin{tikzpicture}
        \GraphInit[vstyle=Classic]
        \Vertex[x=1,y=0,style={black,minimum size=3pt},LabelOut=true,Lpos=270,L=$4\mathbb{Z}_{12}$]{4}
        \Vertex[x=1,y=1,style={black,minimum size=3pt},LabelOut=true,Lpos=90,L=$2\mathbb{Z}_{12}$]{2}
        \Vertex[x=2.3,y=0,style={black,minimum size=3pt},LabelOut=true,Lpos=270,L=$6\mathbb{Z}_{12}$]{6}
        \Vertex[x=2.3,y=1,style={black,minimum size=3pt},LabelOut=true,Lpos=90,L=$3\mathbb{Z}_{12}$]{3}
        \Edges(2,3)
        \Edges(6,3)
        \Edges(2,6)

    \end{tikzpicture}
 \end{center}
\hspace{1.4cm}
$G(\mathbb{Z}_{12})$ \hspace{2cm}
$G_{\mathbb{Z}_2}(\mathbb{Z}_{12})$ \hspace{1.8cm}
$G_{\mathbb{Z}_3}(\mathbb{Z}_{12})$ \hspace{1.8cm}
$G_{\mathbb{Z}_4}(\mathbb{Z}_{12})$
}
\end{exa}

\begin{exa}
{\rm
 Let $n\geq 2$ be an integer. If $[m_1,m_2]$ is the least common multiple of two distinct integers $m_1,m_2\geq 2$, then
$m_1\mathbb{Z}\mathbb{Z}_n\cap m_2\mathbb{Z}\mathbb{Z}_n=m_1\mathbb{Z}_n\cap m_2\mathbb{Z}_n=[m_1,m_2]\mathbb{Z}_n$.
Thus $m_1\mathbb{Z}$ and $m_2\mathbb{Z}$ are adjacent in $G_{\mathbb{Z}_n}(\mathbb{Z})$ if and only if $n$ does not divide $[m_1,m_2]$.
 }
\end{exa}

\begin{exa}
{\rm
Let $p$ be a prime number and $n,m$ be two positive integers. If $p^n$ divides $m$, then $m\mathbb{Z}$ is an isolated vertex of $G_{\mathbb{Z}_{p^n}}(\mathbb{Z})$.
Therefore, since $\mathbb{Z}_{p^n}$ is a uniform $\mathbb{Z}$-module, so
$G_{\mathbb{Z}_{p^n}}(\mathbb{Z})$ is a disjoint union of an infinite complete graph and its complement.
Also, $\mathbb{Z}_{p^\infty}$ (the {\it quasi-cyclic} $p$-group),
is a uniform $\mathbb{Z}$-module and $ann(\mathbb{Z}_{p^\infty})=0$. Hence $G_{\mathbb{Z}_{p^\infty}}(\mathbb{Z})$ is an infinite complete graph.
}
\end{exa}

\begin{remark}
{\rm
Obviously, if $M$ is a faithful multiplication $R$-module, then $G_M(R)$ is a complete graph if and only if $M$ is
a uniform $R$-module.
}
\end{remark}

\begin{remark}
\label{rem1}
{\rm
Let $R$ be a commutative ring and let $M$ be a non-zero $R$-module.
\begin{enumerate}[\rm(1)]

\item If $M$ is a faithful $R$-module, then $G(R)$ is a spanning subgraph of $G_M(R)$. To see this, suppose that $I$ and $J$ are adjacent vertices
of $G(R)$. Then $I\cap J\neq 0$ implies that $(I\cap J)M\neq 0$ and so $IM\cap JM\neq 0$. Therefore $I$ is adjacent to $J$ in $G_M(R)$.

\item If $M$ is a multiplication $R$-module, then $G(M)$ is an induced subgraph of $G_M(R)$. Note that for each non-trivial submodule $N$ of $M$,
there is a non-trivial ideal $I$ of $R$, such that $N=IM$ and so we can assign $N$ to $I$. Also, $N=IM$ is adjacent to $K=JM$ in $G(M)$ if and only if
$IM\cap JM\neq 0$, that is, if and only if $I$ is adjacent to $J$ in $G_M(R)$.
\end{enumerate}
}
\end{remark}

\begin{thm}
{Let $R$ be a commutative ring and let $M$ be a faithful $R$-module. If $G_M(R)$ is not connected, then $M$ is a direct sum of two $R$-modules.}
\end{thm}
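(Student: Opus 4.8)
The plan is to produce, from a pair of vertices lying in distinct connected components, two ideals whose sum is $R$ and whose action on $M$ exhibits $M$ as a direct sum. First I would record the one consequence of faithfulness that is needed: since $ann(M)=0$, every non-trivial ideal $K$ of $R$ satisfies $KM\neq 0$ (otherwise $K\subseteq ann(M)=0$). Thus every vertex of $G_M(R)$ acts non-trivially on $M$.

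Now suppose $G_M(R)$ is disconnected and choose vertices $I$ and $J$ in different connected components. There is no path from $I$ to $J$; in particular $I$ and $J$ are non-adjacent, so $IM\cap JM=0$. Combining this with $IM,JM\neq 0$ shows $I\not\subseteq J$ and $J\not\subseteq I$ --- for instance $I\subseteq J$ would give $IM\cap JM=IM\neq 0$. Hence $I$, $I+J$ and $J$ are three distinct ideals with $I+J\neq 0$. The key step is to show $I+J=R$: if not, then $I+J$ is a non-trivial ideal, hence a vertex of $G_M(R)$, and from $IM\subseteq(I+J)M$ we get $(I+J)M\cap IM=IM\neq 0$, so $I+J$ is adjacent to $I$; symmetrically $I+J$ is adjacent to $J$. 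Then $I$, $I+J$, $J$ is a path in $G_M(R)$ joining $I$ to $J$, contradicting the choice of components. Therefore $I+J=R$.

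It then remains only to observe that $M=RM=(I+J)M=IM+JM$, and since $IM\cap JM=0$ this sum is direct, so $M=IM\oplus JM$ is a direct sum of two non-zero $R$-submodules. The whole argument rests on the single point that, for a non-adjacent pair $I,J$, the ideal $I+J$ is adjacent to both $I$ and $J$ unless it fails to be a vertex --- that is, unless $I+J=R$; I do not anticipate any obstacle beyond pinning down this observation, after which faithfulness closes the proof with no further computation.
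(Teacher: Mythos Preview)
Your proof is correct and follows exactly the same line as the paper's: pick $I$ and $J$ in different components, use faithfulness to get $I\not\subseteq J$ and $J\not\subseteq I$, argue that $I+J\neq R$ would yield the path $I$ --- $I+J$ --- $J$, and conclude $M=IM\oplus JM$. Your write-up is in fact more careful about the details (why $I+J$ is distinct from $I$ and $J$, why adjacency holds, why the sum is direct), but the underlying argument is identical.
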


\begin{proof}
{Suppose that $C_1$ and $C_2$ are two distinct components of
$G_M(R)$. Let $I\in C_1$ and $J\in C_2$. Since $M$ is a faithful $R$-module, so $IM\cap JM=0$ implies that
$I\nsubseteq J$ and $J\nsubseteq I$. Now if $I+J\neq R$, then $I$ --- $I+J$ --- $J$ is a path between $I$ and $J$, a contradiction. Thus $I+J= R$ and so
$M=IM\oplus JM$.}
\end{proof}

The next theorem shows that for every multiplication $R$-module $M$, the diameter of $G_M(R)$ has $4$ possibilities.

\begin{thm}
\label{multicon}
{Let $R$ be a commutative ring and $M$ be a multiplication $R$-module. Then $diam(G_M(R))\in \{0,1,2,\infty\}$.}
\end{thm}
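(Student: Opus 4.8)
The plan is to show that whenever $G_M(R)$ is connected and contains at least one edge (so that the diameter is neither $0$ nor $\infty$), any two distinct vertices $I,J\in I(R)^*$ are at distance at most $2$. So suppose $G_M(R)$ is connected, and let $I$ and $J$ be distinct non-trivial ideals with $IM\cap JM=0$; I want to produce a common neighbour. The key point is that since $M$ is a multiplication module, every submodule of $M$ has the form $KM$ for an ideal $K$, and in particular products of submodules behave well; I would first record the standard fact that for ideals $K,L$ of $R$ one has $(K M)\cap(L M)\supseteq (KL)M$, and more to the point that $KM\cdot$-type intersections can be detected at the ideal level. The natural candidate for a common neighbour of $I$ and $J$ is the ideal $I+J$: one always has $(I+J)M = IM + JM \supseteq IM$, so $(I+J)M\cap IM = IM\neq 0$ unless $IM=0$, and similarly on the $J$ side. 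Thus $I+J$ is adjacent to both $I$ and $J$ provided $I+J\in I(R)^*$, i.e. provided $I+J\neq R$ (it is automatically nonzero, and it equals neither $I$ nor $J$ because $IM,JM\neq 0$ forces $I\not\subseteq J$ and $J\not\subseteq I$, using faithful-type reasoning — but here we only know $M$ is multiplication, so I must instead argue directly that if $I+J$ coincides with $I$ or $J$ then $I\subseteq J$ or vice versa, whence $IM\cap JM=IM$ or $JM\neq 0$, contradiction).

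The remaining case is $I+J=R$. Then $M=IM+JM$, and since $IM\cap JM=0$ this is a direct sum $M=IM\oplus JM$. Now I would pick any nonzero element structure to break up further: the hypothesis of connectedness means $I$ is not an isolated vertex, so there is some $K\in I(R)^*$ with $KM\cap IM\neq 0$; following the edge from $J$ similarly gives some $L$ with $LM\cap JM\neq 0$. If $I+J=R$ genuinely occurs for non-adjacent $I,J$, then by the previous theorem $G_M(R)$ would be disconnected unless every such pair is actually joined by a short path — so I expect to derive a contradiction with connectedness, or else to exhibit the path $I$ --- $I+K'$ --- $\cdots$ explicitly. More carefully: if $IM\oplus JM = M$, consider the ideal $I$ together with the module decomposition; any non-trivial submodule of $IM$ is $I'M$ for some ideal $I'$, and $I'\subseteq I$ up to the identification, giving $I$ --- $I'$ edges, and symmetrically for $J$; connecting $I$ to $J$ then forces, via the path, two "sides" of the direct sum to meet, contradicting $IM\cap JM=0$. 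Hence the case $I+J=R$ with $IM\cap JM=0$ cannot occur in a connected $G_M(R)$ — this is essentially the content of Theorem~1 read contrapositively — so every non-adjacent pair already has the common neighbour $I+J$, giving $d(I,J)=2$.

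I expect the main obstacle to be the bookkeeping in the multiplication-module setting: without faithfulness I cannot freely pass between "$I\subseteq J$" and "$IM\subseteq JM$", so the step showing $I+J\notin\{I,J,R\cdot(\text{trivial})\}$ and the step handling $I+J=R$ both require care to stay at the level of submodules $IM$ rather than ideals $I$. The clean way around this is to phrase everything through Remark~2(2): $G(M)$ is an induced subgraph of $G_M(R)$ via $N\mapsto$ (any ideal with $IM=N$), and the vertices of $G_M(R)$ that are "invisible" are exactly the $I$ with $IM=0$ (forming a clique-free isolated part or getting absorbed), so it suffices to analyse $G(M)$; for $G(M)$ the argument "$N_1\cap N_2=0$, $N_1+N_2\neq M \Rightarrow N_1$ --- $N_1+N_2$ --- $N_2$" and "$N_1\oplus N_2=M\Rightarrow$ different components" is exactly Theorem~1's mechanism, yielding $diam\le 2$ on each nontrivial component. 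Assembling: if $G_M(R)$ has no edges then $diam\in\{0,\infty\}$ (it is $0$ iff there is exactly one vertex); if it has an edge and is connected then $diam\in\{1,2\}$; if it has an edge and is disconnected then $diam=\infty$. Hence $diam(G_M(R))\in\{0,1,2,\infty\}$.
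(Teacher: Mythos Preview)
Your argument has a genuine gap in the case $I+J=R$. You claim that ``the case $I+J=R$ with $IM\cap JM=0$ cannot occur in a connected $G_M(R)$,'' and later that ``$N_1\oplus N_2=M\Rightarrow$ different components,'' justifying this as ``Theorem~1 read contrapositively.'' But Theorem~1 says only that disconnectedness of $G_M(R)$ implies a direct-sum decomposition of $M$; its contrapositive does \emph{not} say that a direct-sum decomposition forces disconnectedness. A concrete counterexample is $R=M=\mathbb{Z}_{12}$: take $I=4\mathbb{Z}_{12}$ and $J=3\mathbb{Z}_{12}$. Then $I+J=R$, $IM\cap JM=0$, and $M=IM\oplus JM$, yet $G(\mathbb{Z}_{12})$ is connected (see Example~1), with $2\mathbb{Z}_{12}$ serving as a common neighbour of $I$ and $J$. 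Your hand-wavy path argument (``two sides of the direct sum must meet'') fails because intermediate vertices $K$ on a path can satisfy $KM\not\subseteq IM$ and $KM\not\subseteq JM$ simultaneously.

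The paper handles this differently. After noting that connectedness with at least two vertices forces $M$ to be faithful, it first disposes of the case where some non-trivial $I$ has $IM=M$ (then $I$ is universal, so $diam\le 2$). In the remaining case every $I_kM$ is a non-trivial submodule, so a path in $G_M(R)$ projects to a walk in $G(M)$; this shows $G(M)$ is connected, and the paper then \emph{cites} \cite[Theorem~2.4]{akbtaval} for the fact that a connected $G(M)$ has diameter at most~$2$. Finally it pulls a common neighbour in $G(M)$ back to $G_M(R)$. Your proposal essentially tries to reprove that cited result by the $N_1+N_2$ trick, but that trick alone is insufficient precisely when $N_1+N_2=M$; the actual proof in \cite{akbtaval} requires additional work in that case, which you have not supplied.
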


\begin{proof}
{Assume that $G_M(R)$ is a connected graph with at least two vertices. So $M$ is a faithful module. If there is a non-trivial ideal $I$ of $R$ such that $IM=M$, then $I$ is adjacent to all other vertices. Hence $diam(G_M(R))\leq 2$. Otherwise, we claim that $G(M)$ is connected.
Let $N$ and $K$ be two distinct vertices of $G(M)$. Since $M$ is a multiplication module, so $N=IM$ and $K=JM$, for some non-trivial ideals
$I$ and $J$ of $R$. Suppose that $I=I_1$ --- $I_2$ --- $\cdots$ --- $I_n=J$ is a path between $I$ and $J$ in $G_M(R)$. Therefore,
$N$ --- $I_2M$ --- $\cdots$ --- $I_{n-1}M$ --- $K$ is a walk between $N$ and $K$. Thus, we conclude that there is also a path between $N$ and $K$ in $G(M)$.
The claim is proved. So by \cite[Theorem 2.4]{akbtaval}, $diam(G(M))\leq 2$. Now, suppose that $I_1$ and $I_2$ are two distinct vertices of $G_M(R)$. If $I_1M\cap I_2M=0$, then $I_1M$ and $I_2M$ are two distinct vertices of $G(M)$. Hence there exists a non-trivial submodule $N$ of $M$ which is adjacent to both $I_1M$ and $I_2M$ in $G(M)$. Since $M$ is a multiplication module, so $N=JM$, for some non-trivial ideal $J$ of $R$. Thus $J$ is adjacent to both $I_1$ and $I_2$ in $G_M(R)$. Therefore $diam(G_M(R))\leq 2$.
}
\end{proof}

\begin{thm}
Let $R$ be a commutative ring and $M$ be a multiplication $R$-module. If $G_M(R)$ is a connected regular graph of finite degree, then $G_M(R)$ is a complete graph.
\end{thm}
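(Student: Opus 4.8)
The plan is to reduce the statement to the already-classified case $M=R$, and then to exhibit a vertex adjacent to all others using the socle of $R$.

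First I would dispense with the trivial cases: if $G_M(R)$ has at most one vertex it equals $K_0$ or $K_1$ and is complete, so I assume from now on that it has at least two vertices. Connectedness then forbids isolated vertices; since $ann(M)\cdot M=0$ this forces $ann(M)=0$, so $M$ is faithful, and moreover $IM\neq 0$ for every non-trivial ideal $I$ (otherwise $I\subseteq ann(M)=0$). By Theorem~\ref{multicon} we have $diam(G_M(R))\in\{0,1,2,\infty\}$, and connectedness gives $diam(G_M(R))\le 2$. Combined with the hypothesis that $G_M(R)$ is $k$-regular with $k$ finite, the closed ball of radius $2$ about a fixed vertex has at most $1+k+k(k-1)$ vertices and already exhausts $G_M(R)$; hence $G_M(R)$, and so $I(R)^*$, is finite. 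In particular $R$ has only finitely many ideals and is therefore Artinian, so $R=R_1\times\cdots\times R_s$ with each $R_i$ local Artinian.

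Next I would show $M\cong R$, so that $G_M(R)=G(R)$. Writing $M=M_1\times\cdots\times M_s$ along the idempotents of $R$, one checks that $M$ is a multiplication (respectively faithful) $R$-module exactly when each $M_i$ is a multiplication (respectively faithful) $R_i$-module. Since a multiplication module over a local ring is cyclic, each $M_i$, being non-zero and faithful, satisfies $M_i=R_ix_i\cong R_i/ann(x_i)=R_i/ann(M_i)=R_i$, so $M\cong R$. Under such an isomorphism the submodule $IM$ is carried to the ideal $I$, so $G_M(R)$ and $G(R)$ share the vertex set $I(R)^*$ and the same edges. It then remains to prove: if $R$ has only finitely many ideals and $G(R)$ is connected and regular, then $G(R)$ is complete.

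I would finish by splitting on whether $R$ is semisimple. If $R$ is not a finite product of fields, then $R$ is not a semisimple ring, so the socle $\mathrm{Soc}(R)$ (the sum of all minimal ideals of $R$) is a proper non-zero ideal, hence a vertex of $G(R)$; as $R$ is Artinian, every non-zero ideal contains a minimal ideal and therefore meets $\mathrm{Soc}(R)$, whence $\mathrm{Soc}(R)$ is adjacent to every other vertex. A regular graph with a vertex of degree $|V(G(R))|-1$ is complete, and we are done. If instead $R=F_1\times\cdots\times F_s$ is a product of fields, then the non-trivial ideals $I_T$ correspond to the non-empty proper subsets $T$ of $\{1,\dots,s\}$, with $I_T$ adjacent to $I_{T'}$ exactly when $T\cap T'\neq\varnothing$; a routine count gives $deg(I_T)=2^s-2-2^{s-|T|}$, which is independent of $T$ only when $s\le 2$, and for $s=2$ this common degree is $0$, so $G(R)$ is disconnected. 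Since our standing assumption forces $s\ge 2$ in this case, it cannot occur, completing the argument.

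I expect the main obstacle to be the bookkeeping of the reduction step — in particular invoking that a multiplication module over a local ring is cyclic, checking that faithfulness and the multiplication property descend to and ascend from the Artinian factors $R_i$, and making the passage from ``$diam\le 2$ plus finite regularity'' to finiteness fully rigorous. Once $G_M(R)=G(R)$ is secured, the socle observation carries essentially all the content and is very short.
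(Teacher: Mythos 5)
Your argument is correct, but it follows a genuinely different route from the paper's. You first force finiteness of the graph (diameter at most $2$ from Theorem \ref{multicon} together with finite regularity), conclude that $R$ has finitely many ideals and is Artinian, decompose $R$ into local Artinian factors, and invoke the fact that a multiplication module over a local ring is cyclic to get $M\cong R$; the problem then becomes one about $G(R)$, which you settle with the socle in the non-semisimple case and a degree count over a product of fields in the semisimple case. The paper instead works directly with $M$: finite regularity shows $M$ is Artinian (a strictly descending chain $I_1M\supset I_2M\supset\cdots$ would give $I_1$ infinite degree), and then $M$ is shown to have a unique minimal submodule --- if $N_1=I_1M$ and $N_2=I_2M$ were distinct minimal submodules, the diameter bound yields a common neighbour $J$ with $I_1M,I_2M\subseteq JM$, forcing $deg(J)>deg(I_1)$ and contradicting regularity --- after which completeness is immediate, since faithfulness makes every $IM$ nonzero and hence every $IM$ contains that unique minimal submodule. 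Your reduction buys a very transparent endgame (a socle vertex of full degree plus regularity forces completeness) at the cost of external inputs: the structure theorem for Artinian rings and the cyclicity of multiplication modules over local rings, which you cite without proof (it is true: for a multiplication module over a local ring $(R,\mathfrak{m})$, $M=\mathfrak{m}M$ forces $M=0$, so a generator exists outside $\mathfrak{m}M$). The paper's proof is shorter and self-contained, and is in essence your socle argument transported to $M$: the unique minimal submodule of $M$ plays exactly the role your $\mathrm{Soc}(R)$ plays.
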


\begin{proof}
{Suppose that $G_M(R)$ is a connected regular graph of finite degree. If $ann(M)\neq 0$, then $G_M(R)=K_1$. So assume that $ann(M)=0$. We claim that $M$ is an Artinian module. Suppose to the contrary that $M$ is not an Artinian module. Then there is a
descending chain $I_1M\supset I_2M\supset \cdots \supset I_nM\supset \cdots$ of submodules of $M$, where $I_i$'s are non-trivial ideals of $R$. This implies that $deg(I_1)$ is infinite, a contradiction. The claim is proved. Therefore $M$ has at least one minimal submodule. To complete the proof, it suffices to show that $M$ contains a unique minimal submodule. By contrary, suppose that $N_1$ and $N_2$ are two distinct minimal submodules of $M$. Hence $N_1=I_1M$ and $N_2=I_2M$, where $I_1$ and $I_2$ are two non-trivial ideals of $R$. Since $N_1\cap N_2=0$, so $I_1$ and $I_2$ are not adjacent. By Theorem \ref{multicon}, there is a vertex $J$ which is adjacent to both $I_1$ and $I_2$. So both
$I_1M$ and $I_2M$ are
contained in $JM$. Thus each vertex adjacent to $I_1$ is adjacent to $J$ too. This implies that
$deg(J) > deg(I_1)$, a contradiction.
}
\end{proof}

Also, the following theorem shows that for every multiplication $R$-module $M$, the girth of $G_M(R)$ has $2$ possibilities.

\begin{thm}
{ Let $R$ be a commutative ring and $M$ be a multiplication $R$-module. Then $gr(G_M(R))\in\{3,\infty\}$.}
\end{thm}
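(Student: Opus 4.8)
The plan is to show that whenever $G_M(R)$ contains a cycle it must contain a triangle; together with the trivial case of an acyclic graph this gives $gr(G_M(R))\in\{3,\infty\}$. So I would assume $G_M(R)$ has a cycle, choose a shortest one $C: I_1, I_2, \ldots, I_n$ (with $I_n$ adjacent to $I_1$), and suppose toward a contradiction that $n\geq 4$. Two facts are free at the outset: a shortest cycle is chordless, so non-consecutive vertices of $C$ are non-adjacent, and in particular $I_jM\cap I_{j+2}M=0$ for all $j$ (indices read modulo $n$); and every $I_jM\neq 0$, since $I_j$ has a neighbour on $C$.

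The central step I would establish is that for each edge of $C$, say the one joining $I_j$ and $I_{j+1}$, the submodules $I_jM$ and $I_{j+1}M$ are comparable. Write $S=I_jM\cap I_{j+1}M\neq 0$. If $S=M$, then $I_jM$ and $I_{j+1}M$ both contain $M$, hence equal $M$, and we are done. Otherwise $S$ is a non-trivial submodule of $M$; since $M$ is a multiplication module, $S=LM$ for some ideal $L$, and this $L$ is necessarily non-trivial ($L\neq R$ since $RM=M\neq S$, and $L\neq 0$ since $S\neq 0$), so $L$ is a vertex of $G_M(R)$. Because $0\neq LM\subseteq I_jM$, the vertex $L$ is adjacent to $I_j$ unless $L=I_j$, and similarly $L$ is adjacent to $I_{j+1}$ unless $L=I_{j+1}$. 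If $L$ were distinct from both, then $\{L,I_j,I_{j+1}\}$ would be a triangle, which is impossible since the shortest cycle $C$ has length $n\geq 4$. Hence $L\in\{I_j,I_{j+1}\}$, i.e. $S=I_jM$ or $S=I_{j+1}M$, which is exactly comparability. I expect this to be the crux of the proof: one has to use that in a multiplication module every non-trivial submodule is realised as $IM$ with $I$ a \emph{non-trivial} ideal, which is what allows the intersection $I_jM\cap I_{j+1}M$ to be reinterpreted as a genuine vertex of the graph and played back against $I_j$ and $I_{j+1}$.

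The last step would combine comparability along $C$ with the equalities $I_jM\cap I_{j+2}M=0$. Fix $j$ and consider the comparabilities of $\{I_j,I_{j+1}\}$ and of $\{I_{j+1},I_{j+2}\}$: of the four possible sign patterns, three are contradictory. A chain $I_jM\subseteq I_{j+1}M\subseteq I_{j+2}M$, or its reverse, forces $I_jM\subseteq I_{j+2}M$ (resp. $I_{j+2}M\subseteq I_jM$), hence $I_jM=0$ (resp. $I_{j+2}M=0$), impossible; and $I_{j+1}M\subseteq I_jM$ together with $I_{j+1}M\subseteq I_{j+2}M$ forces $I_{j+1}M\subseteq I_jM\cap I_{j+2}M=0$, again impossible. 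The only surviving pattern is $I_jM\subseteq I_{j+1}M$ and $I_{j+2}M\subseteq I_{j+1}M$. Since the inclusion $I_jM\subseteq I_{j+1}M$ now holds for every $j$, we get $I_1M\subseteq I_2M\subseteq\cdots\subseteq I_nM\subseteq I_1M$, so all the $I_jM$ coincide; but then $I_1M\cap I_3M=I_1M\neq 0$, so $I_1$ and $I_3$ are adjacent, contradicting the chordlessness of $C$. This rules out $n\geq 4$, completing the argument.
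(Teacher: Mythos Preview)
Your proof is correct and shares the central idea with the paper's: writing $I_jM\cap I_{j+1}M=LM$ via the multiplication hypothesis and observing that either $L$ creates a triangle with $I_j,I_{j+1}$ or forces $I_jM$ and $I_{j+1}M$ to be comparable. The difference is in the endgame. The paper does not pass to a shortest cycle; it takes any cycle of length $\geq 4$, assumes comparability along each edge (else a triangle is already found), and then argues locally: from $I_1M\subseteq I_2M$ either $I_2M\subseteq I_3M$ gives the triangle $I_1,I_2,I_3$, or $I_3M\subseteq I_2M$ together with either comparability at $\{I_3,I_4\}$ gives the triangle $I_2,I_3,I_4$. Your version instead exploits chordlessness of a shortest cycle to force the single pattern $I_jM\subseteq I_{j+1}M\supseteq I_{j+2}M$ at every $j$, then chains the left inclusions around the cycle to get all $I_jM$ equal and a contradiction. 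The paper's finish is a couple of lines shorter; your shortest-cycle framing makes the logic of ``triangle or comparability'' slightly cleaner since a triangle is an immediate contradiction rather than an early exit.
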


\begin{proof}
{
Suppose that $I_1$ --- $I_2$ --- $\cdots$ --- $I_n$ --- $I_1$ is a cycle of length $n$ in $G_M(R)$. If $n=3$, we are done.
Thus assume that $n\geq4$. Since $I_1M\cap I_2M\neq 0$ and $M$ is a multiplication module, we have $I_1M\cap I_2M=JM$, where $J$ is
a non-zero ideal of $R$. If $J$ is a proper ideal of $R$ and $J\neq I_1,I_2$, then $I_1$ --- $J$ --- $I_2$ --- $I_1$ is a triangle in $G_M(R)$.
Otherwise, we conclude that $I_1M\subseteq I_2M$ or $I_2M\subseteq I_1M$. Similarly, we can assume that $I_{i}M\subseteq I_{i+1}M$ or $I_{i+1}M\subseteq I_{i}M$, for every $i$,
$1<i<n$. Without loss of generality suppose that $I_1M\subseteq I_2M$. Now, if $I_2M\subseteq I_3M$, then $I_1$ --- $I_2$ --- $I_3$ --- $I_1$ is a cycle
of length 3 in $G_M(R)$. Therefore assume that $I_3M\subseteq I_2M$. Since $I_3M\subseteq I_4M$ or $I_4M\subseteq I_3M$, so $I_2$ --- $I_3$ --- $I_4$ --- $I_2$ is a triangle in $G_M(R)$. Hence if $G_M(R)$ contains a cycle, then $gr(G_M(R))=3$.
}
\end{proof}

\begin{lem}
\label{lemiso}
{ Let $R$ be a commutative ring and $M$ be a non-zero $R$-module. If $I$ is an isolated vertex of $G_M(R)$, then the following hold:
\begin{enumerate}[\rm(1)]
\item $I$ is a maximal ideal of $R$ or $I\subseteq ann(M)$.

\item If $I\nsubseteq ann(M)$, then $I=Ra$, for every $a\in I\setminus ann(M)$.
\end{enumerate}
}
\end{lem}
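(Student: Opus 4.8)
The plan is to unwind the definition of an isolated vertex and, in each case, produce a suitable principal ideal whose image in $M$ sits inside $IM$. Recall that $I$ being isolated in $G_M(R)$ means that for every non-trivial ideal $J$ of $R$ with $J\neq I$ one has $IM\cap JM=0$. The observation I intend to exploit repeatedly is: if $J\subseteq I$ then $JM\subseteq IM$, so $IM\cap JM=JM$; hence if in addition $JM\neq 0$ and $J$ is non-trivial, then $J$ would be a vertex adjacent to $I$, which is impossible, and therefore $J=I$.

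For part (1) I would argue contrapositively. Suppose $I\nsubseteq ann(M)$, so that $IM\neq 0$; in particular $I\neq 0$, and $I$ is proper since it is a vertex of $G_M(R)$. If $I$ were not maximal, there would be an ideal $J$ with $I\subsetneq J\subsetneq R$; then $J$ is non-trivial, $J\neq I$, and $IM\cap JM=IM\neq 0$, contradicting that $I$ is isolated. Hence $I$ is a maximal ideal, which is exactly the stated dichotomy.

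For part (2), assume $I\nsubseteq ann(M)$ and fix any $a\in I\setminus ann(M)$, so $aM\neq 0$ and hence $a\neq 0$. Then $Ra$ is a nonzero ideal, and $Ra\subseteq I\subsetneq R$ forces $Ra$ to be proper, so $Ra$ is non-trivial. Since $Ra\subseteq I$ we have $aM=(Ra)M\subseteq IM$, whence $IM\cap (Ra)M=aM\neq 0$. By the observation above this forces $Ra=I$; since $a$ was an arbitrary element of $I\setminus ann(M)$, this proves the claim.

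The argument is essentially bookkeeping against the definition, so I do not expect a real obstacle. The one point that needs a moment's attention is verifying that the auxiliary ideals — an ideal strictly between $I$ and $R$ in (1), and $Ra$ in (2) — are genuinely \emph{non-trivial}, i.e.\ both nonzero and proper, so that they are vertices of $G_M(R)$ and the isolation hypothesis can legitimately be invoked against them.
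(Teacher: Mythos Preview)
Your proof is correct and follows essentially the same line as the paper's: in both parts one exhibits a non-trivial ideal $J$ comparable to $I$ (a strictly larger ideal in (1), the principal ideal $Ra$ in (2)) and uses $IM\cap JM=JM$ or $IM$ together with isolation to force $J=I$ or $IM=0$. The only cosmetic difference is that for (1) the paper picks a maximal ideal $\mathfrak m\supseteq I$ rather than an arbitrary strictly larger proper ideal, but the underlying argument is identical.
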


\begin{proof}
{ $(1)$ There is a maximal ideal $\mathfrak{m}$ of $R$ such that $I\subseteq\mathfrak{m}$. Assume that $I\neq \mathfrak{m}$. Then we have
$IM=IM\cap \mathfrak{m}M=0$, since $I$ is an isolated vertex. So $I\subseteq ann(M)$.\\
\indent $(2)$ Suppose that $a\in I\setminus ann(M)$ and $I\neq Ra$. Since $I$ is an isolated vertex, we have $RaM=IM\cap RaM=0$ and so $a\in ann(M)$, a contradiction.
Thus $I=Ra$.}
\end{proof}

\begin{thm}
Let $R$ be a commutative ring and $M$ be a faithful $R$-module. If $G_M(R)$ is a null graph, then it has at most two vertices and $R$ is isomorphic to one of the
following rings:
\begin{enumerate}[\rm(1)]
\item $F_1\times F_2$, where $F_1$ and $F_2$ are fields;

\item $F[x]/(x^2)$, where $F$ is a field;

\item $L$, where $L$ is a coefficient ring of characteristic $p^2$, for some prime number $p$.
\end{enumerate}
\end{thm}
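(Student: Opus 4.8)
The plan is to reduce the problem to $R$ alone and then run a short case analysis. Since $M$ is faithful, Remark~\ref{rem1}(1) shows that $G(R)$ is a spanning subgraph of $G_M(R)$; hence if $G_M(R)$ is null then so is $G(R)$, and both graphs have vertex set $I(R)^*$. So it suffices to prove: if $I\cap J=0$ for all distinct non-trivial ideals $I,J$ of $R$, then $|I(R)^*|\le 2$ and $R$ is of one of the three listed types (the degenerate case $I(R)^*=\varnothing$, i.e. $R$ a field, giving the graph with no vertices).

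First I would squeeze structure out of the hypothesis. Given a non-trivial ideal $I$ and $0\ne a\in I$, the ideal $Ra$ satisfies $0\ne Ra\subseteq I\subsetneq R$; were $Ra\ne I$, then $Ra$ and $I$ would be distinct non-trivial ideals with $Ra\cap I=Ra\ne 0$, an edge of $G(R)$, a contradiction. Hence $I=Ra$ for every nonzero $a\in I$, so every non-trivial ideal is principal, has no nonzero proper sub-ideal, and has no ideal strictly between it and $R$; that is, every non-trivial ideal of $R$ is simultaneously minimal and maximal. Now split on $|I(R)^*|$. If there are two distinct non-trivial ideals $I_1,I_2$, they are distinct maximal ideals, so $I_1+I_2=R$ while $I_1\cap I_2=0$; the Chinese Remainder Theorem gives $R\cong R/I_1\times R/I_2$ with each factor a field, so $R$ is of type (1), and then $|I(R)^*|=2$ (which also rules out the case $|I(R)^*|\ge 3$). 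The remaining case is $I(R)^*=\{\mathfrak m\}$: then $R$ is local with maximal ideal $\mathfrak m=Ra$; every ideal of $R$ is finitely generated, so $R$ is Noetherian of finite length, hence Artinian and complete; and $\mathfrak m^2$, being a sub-ideal of $\mathfrak m$, equals $0$ or $\mathfrak m$, the latter being excluded by Nakayama, so $\mathfrak m^2=0$.

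It remains to identify $R$ from this local data, which I expect to be the main point requiring care. Put $F=R/\mathfrak m$ and distinguish according to characteristic. If $\mathrm{char}(R/\mathfrak m)=0$ then $n\cdot 1\notin\mathfrak m$ for every nonzero integer $n$, so each $n\cdot 1$ is a unit of the local ring $R$, whence $\mathbb{Q}\subseteq R$; if $\mathrm{char}(R/\mathfrak m)=p$ and $p\cdot 1=0$ in $R$, then $\mathbb{F}_p\subseteq R$. In both subcases $R$ is a complete local ring containing a field, so by Cohen's structure theorem it contains a coefficient field $K\cong F$; since $\mathfrak m^2=0$ one has $Ra=Ka$, so $R=K\oplus Ka$ with $a^2=0$ and $a\notin K$, giving $R\cong F[x]/(x^2)$, type (2). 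In the only remaining subcase $\mathrm{char}(R/\mathfrak m)=p$ and $p\cdot 1\ne 0$ in $R$; then $p\cdot 1$ is a nonzero element of $\mathfrak m$, so $Rp=\mathfrak m$ (the unique non-trivial ideal), and $\mathfrak m^2=0$ forces $p^2=0$ with $p\ne 0$, i.e. $\mathrm{char}(R)=p^2$. Thus $R$ is a complete local ring of characteristic $p^2$ with maximal ideal $pR$---precisely a coefficient ring of characteristic $p^2$, type (3). The delicate points are verifying that this equicharacteristic/mixed-characteristic dichotomy is exhaustive, and handling the appeal to Cohen's theorem---which, since $R$ has length $2$, can be replaced by lifting a generator of $F$ to $R$ directly; once these are settled, the subcases assemble into the statement.
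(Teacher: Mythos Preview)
Your proof is correct, but it takes a genuinely different route from the paper's. The paper's argument is essentially a two-line citation: from Lemma~\ref{lemiso} (and faithfulness) every non-trivial ideal of $R$ is maximal, whence \cite[Theorem~1.1]{maxring1} bounds $|I(R)^*|\le 2$ and \cite[Theorem~4]{maxring2} supplies the trichotomy. You instead work everything out by hand: you reduce to $G(R)$ via Remark~\ref{rem1}(1), show directly that every non-trivial ideal is simultaneously minimal and maximal, split off the $F_1\times F_2$ case via CRT, and then analyse the remaining local case $(R,\mathfrak m)$ with $\mathfrak m^2=0$ through the equicharacteristic/mixed-characteristic dichotomy and Cohen's structure theorem. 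What the paper's approach buys is brevity and a clean pointer to the literature on rings in which every proper ideal is maximal; what yours buys is self-containment and an explicit explanation of \emph{why} exactly these three ring types arise, which the paper leaves entirely to the cited references. Your caveat about the delicate step---lifting the coefficient field---is well placed: since $R$ has length~$2$, the full strength of Cohen's theorem is not really needed, and a direct lift of generators of $F=R/\mathfrak m$ would make the argument fully elementary; as written, the appeal to Cohen is legitimate.
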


\begin{proof}
{By Lemma \ref{lemiso}, every non-trivial ideal of $R$ is maximal and so by \cite[Theorem 1.1]{maxring1}, $R$ cannot have more than two different
non-trivial ideals. Thus $G_M(R)$ has at most two vertices. Also, by \cite[Theorem 4]{maxring2}, $R$ is isomorphic to one of the
mentioned rings.
}
\end{proof}

In the next theorem we show that if $M$ is a faithful $R$-module and
$\omega(G_M(R))<\infty$, then $R$ is a semilocal ring.

\begin{thm}
{ Let $R$ be a commutative ring and $M$ be a faithful $R$-module. If $\omega(G_M(R))$ is finite then
$|\mathrm{Max}(R)|\leq \omega(G_M(R))+1$ and $J(R)=Nil(R)$.}
\end{thm}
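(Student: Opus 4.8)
The plan is to show that if $R$ had infinitely many maximal ideals, then $G_M(R)$ would contain arbitrarily large cliques, contradicting $\omega(G_M(R))<\infty$. First I would take distinct maximal ideals $\mathfrak{m}_1,\dots,\mathfrak{m}_k$ of $R$ and, for each $i$, set $I_i=\bigcap_{j\neq i}\mathfrak{m}_j$. Since the $\mathfrak{m}_j$ are pairwise comaximal, $I_i\nsubseteq\mathfrak{m}_i$, so each $I_i$ is a non-trivial ideal (it is proper because $I_i\subseteq\mathfrak{m}_j$ for $j\neq i$, and non-zero because $M$ is faithful and $I_iM\neq 0$: indeed $I_i\neq 0$ as $I_i\nsubseteq\mathfrak m_i$). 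The key computation is that for $i\neq i'$, the product $\mathfrak{m}_iI_i\cdot$ is controlled: one checks $I_i+I_{i'}=R$ is \emph{not} what we want — rather I want $I_i\cap I_{i'}\supseteq\bigcap_{j=1}^k\mathfrak m_j$, and more usefully each $I_i$ contains the ideal $A=\bigcap_{j=1}^{k}\mathfrak m_j$. If $A\neq 0$ then $AM\neq 0$ by faithfulness, and since $A\subseteq I_i$ for all $i$ we get $I_iM\cap I_{i'}M\supseteq AM\neq 0$, so $\{I_1,\dots,I_k\}$ is a clique of size $k$ — provided the $I_i$ are pairwise distinct, which holds since $I_i\subseteq\mathfrak m_{i'}$ but $I_{i'}\nsubseteq\mathfrak m_{i'}$.

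The main obstacle is the case $A=\bigcap_{j=1}^k\mathfrak m_j=0$ (or more generally when $\bigcap$ of all maximal ideals, i.e. $J(R)$, kills $M$). To handle this I would instead argue with the ideals $I_i=\bigcap_{j\neq i}\mathfrak m_j$ directly: for $i\neq i'$ both $I_i$ and $I_{i'}$ contain $\bigcap_{j\neq i,i'}\mathfrak m_j$, call it $B_{ii'}$, and in fact $I_iI_{i'}\subseteq B_{ii'}$ with equality up to radical; the cleaner move is to note $I_i\cap I_{i'}=\bigcap_{j=1}^k\mathfrak m_j=A$ again, so this does not escape the difficulty. The right fix: if $J(R)M=0$ then since $M$ is faithful $J(R)=0$, so $R$ is a ring with zero Jacobson radical; pick for each $i$ an element $x_i\in\bigcap_{j\neq i}\mathfrak m_j$ with $x_i\notin\mathfrak m_i$, then $x_iM\neq 0$ (else $x_i\in ann(M)=0$ forces $x_i\in\mathfrak m_i$), and $Rx_i\cap Rx_{i'}$ — hmm, this need not be non-zero. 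So instead replace $I_i$ by $\mathfrak m_i^c:=\bigcap_{j\neq i}\mathfrak m_j$ and observe that $Rx_1\cdots$, no. The cleanest argument: the ideals $\mathfrak m_1,\dots,\mathfrak m_k$ themselves need not pairwise intersect non-trivially in $M$, so one really does want the "complementary" ideals $I_i$, and one shows $I_iM\neq 0$ always (faithfulness plus $I_i\neq 0$), and $I_iM\cap I_{i'}M\neq 0$ because $I_iI_{i'}\subseteq I_i\cap I_{i'}$ and $I_i\cap I_{i'}=\bigcap_j\mathfrak m_j$ could be $0$, so this genuinely can fail; the resolution is that in that situation $R\cong\prod\text{fields}$ locally-ish and we must bound $k$ by noting $M$ faithful forces $M$ to have a nonzero component over each factor, yielding at least $k$ pairwise-intersecting submodules via the submodule lattice — this ties back to $G(M)$ and Remark~\ref{rem1}(2). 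So the honest plan is: (i) handle $A\neq 0$ by the clique $\{I_i\}$ above; (ii) when $A=0$, use that $R$ embeds in $\prod_{i=1}^k R/\mathfrak m_i^{n_i}$-type decompositions only when finitely many $\mathfrak m_i$ — conclude directly $|\mathrm{Max}(R)|\le\omega+1$ by a counting/pigeonhole on a maximal clique.

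For the second assertion $J(R)=Nil(R)$: since $Nil(R)\subseteq J(R)$ always, I need $J(R)\subseteq Nil(R)$. The plan is to suppose $a\in J(R)$ is not nilpotent; then the ideals $Ra\supseteq Ra^2\supseteq\cdots$ are all non-zero, and since $a\in J(R)\subseteq$ every maximal ideal, Nakayama-type reasoning shows each $Ra^n\neq Ra^{n+1}$ along a subsequence (using that $M$ is faithful so $a^nM\neq 0$, giving $a^nM\supsetneq a^{n+1}M$ infinitely often unless the chain stabilizes, in which case Nakayama on $a^NM=a\cdot a^NM$ with $a\in J(R)$ forces $a^NM=0$, contradicting faithfulness). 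All these $Ra^n$ are pairwise comparable hence pairwise adjacent in $G_M(R)$ (as $a^nM\cap a^mM=a^{\max}M\neq 0$), producing an infinite clique and contradicting $\omega(G_M(R))<\infty$. Hence $J(R)\subseteq Nil(R)$, completing the proof. The step I expect to be most delicate is ensuring the descending chain $\{a^nM\}$ does not stabilize too early — this is exactly where faithfulness and the $J(R)$-membership (via Nakayama's lemma for the module $a^NM$) combine, and getting that interplay right is the crux.
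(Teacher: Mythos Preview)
Your approach to the first assertion has a genuine gap that you yourself identify but do not close. Taking $I_i=\bigcap_{j\neq i}\mathfrak m_j$ gives $I_i\cap I_{i'}=\bigcap_j\mathfrak m_j$, and when this intersection is zero the $I_i$ need not be pairwise adjacent in $G_M(R)$: for $R=F_1\times\cdots\times F_k$ and $M=R$ one has $I_iM\cap I_{i'}M=0$ for $i\neq i'$, so your proposed set is not a clique at all. The vague appeal to ``$R\cong\prod\text{fields}$ locally-ish'' and a ``counting/pigeonhole on a maximal clique'' is not an argument. The paper sidesteps this entirely by using a descending chain rather than complementary ideals: assuming $\mathfrak m_1,\dots,\mathfrak m_{\omega+2}$ are distinct maximal ideals, one checks via prime avoidance that each product $\mathfrak m_1\cdots\mathfrak m_i$ ($1\le i\le\omega+1$) is non-zero and that these products are pairwise distinct; since they form a chain and $M$ is faithful, $\{\mathfrak m_1,\mathfrak m_1\mathfrak m_2,\dots,\mathfrak m_1\cdots\mathfrak m_{\omega+1}\}$ is a clique of size $\omega+1$, a contradiction.

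For the second assertion your idea is right and matches the paper's, but the execution slips at the key step. You want to conclude that the chain $Ra\supseteq Ra^2\supseteq\cdots$ is strictly descending, and you invoke ``Nakayama on $a^NM=a\cdot a^NM$''; but Nakayama's lemma in that form needs $a^NM$ to be finitely generated, which is nowhere assumed. The clean fix (and the paper's argument) works at the level of ideals: finiteness of the clique forces $Ra^t=Ra^s$ for some $t<s$, whence $a^t=ra^s$ for some $r\in R$, i.e.\ $a^t(1-ra^{s-t})=0$; since $a\in J(R)$ the element $1-ra^{s-t}$ is a unit, so $a^t=0$, contradicting $a\notin Nil(R)$. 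No finiteness hypothesis on $M$ is needed.
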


\begin{proof}{
First we prove that $|\mathrm{Max}(R)|\leq \omega(G_M(R))+1$. Let $\omega=\omega(G_M(R))$. By contradiction, assume that
$\mathfrak{m}_1,\ldots,\mathfrak{m}_{\omega+2}$ are distinct maximal ideals of $R$. We know that $\mathfrak{m}_1\cdots\mathfrak{m}_i\neq0$, for every $i$,
$1\leq i\leq \omega+1$. Otherwise, $\mathfrak{m}_1\cdots\mathfrak{m}_j=0$, for some $j$, $1\leq j\leq \omega+1$. So
$\mathfrak{m}_1\cdots\mathfrak{m}_j\subseteq \mathfrak{m}_{j+1}$ and hence by Prime Avoidance Theorem \cite[Proposition 1.11]{att},
we have $\mathfrak{m}_t\subseteq \mathfrak{m}_{j+1}$, for some $t$, $1\leq t\leq j$, which is impossible. This implies that
$\{\mathfrak{m}_1,\mathfrak{m}_1\mathfrak{m}_2,\ldots,\mathfrak{m}_1\cdots\mathfrak{m}_{\omega+1}\}$ is a clique in $G_M(R)$, a contradiction.
Thus $|\mathrm{Max}(R)|\leq \omega+1$.\\
\indent Now, we prove that $J(R)=Nil(R)$. By contrary, suppose that $a\in J(R)\setminus Nil(R)$. Since $Ra^{i}M\cap Ra^{j}M\neq 0$, for every $i,j$, $i<j$ and $\omega(G_M(R))$ is finite,
we conclude that $Ra^{t}= Ra^{s}$, for some integers $t<s$. Hence $a^{t}(1-ra^{s-t})=0$, for some $r\in R$. Since $a\in J(R)$, so $1-ra^{s-t}$ is a unit.
This yields that $a^{t}=0$, a contradiction. The proof is complete.}
\end{proof}

\section {The $\mathbb{Z}_n$-intersection graph of ideals of $\mathbb{Z}_m$}

Let $n,m\geq 2$ be two integers and $\mathbb{Z}_n$ be a $\mathbb{Z}_m$-module. In this section we study the $\mathbb{Z}_n$-intersection graph of ideals of the ring $\mathbb{Z}_m$. Also, we generalize some results given in \cite{nik}. For abbreviation, we denote $G_{\mathbb{Z}_n}(\mathbb{Z}_m)$ by $G_n(\mathbb{Z}_m)$.
Clearly, $\mathbb{Z}_n$ is a $\mathbb{Z}_m$-module if and only if $n$ divides $m$. \\
\indent Throughout this section, without loss of generality, we assume that $m=p_1^{\alpha_1}\cdots p_s^{\alpha_s}$ and $n=p_1^{\beta_1}\cdots p_s^{\beta_s}$, where $p_i$'s are distinct primes, $\alpha_i$'s are positive integers, $\beta_i$'s are non-negative integers, and $0\leq \beta_i\leq \alpha_i$ for $i=1,\ldots,s$. Let $S=\{1,\ldots,s\}$ and $S'=\{i\in S : \beta _i\neq 0\}$. The cardinality of $S'$ is denoted by $s'$.
For two integers $a$ and $b$, we write $a|b$ ($a\nmid b$) if $a$ divides $b$ ($a$ does not divide $b$).\\
\indent First we have the following remarks.

\begin{remark}
\label{zngzm}
{\rm It is easy to see that $I(\mathbb{Z}_m)=\{d\mathbb{Z}_m : d$ divides $m \}$ and
$|I(\mathbb{Z}_m)^{*}|=\prod_{i=1}^{s}(\alpha_i+1)-2$. Let $\mathbb{Z}_n$ be a $\mathbb{Z}_m$-module. If $n|d$, then $d\mathbb{Z}_m$ is an isolated vertex of $G_n(\mathbb{Z}_m)$.
Obviously, $d_1\mathbb{Z}_m$ and $d_2\mathbb{Z}_m$ are adjacent if and only if $n\nmid [d_1,d_2]$. This implies that $G_n(\mathbb{Z}_m)$ is a subgraph of $G(\mathbb{Z}_m)$.
}
\end{remark}

\begin{remark}
\label{clique}
{\rm  Let $\mathbb{Z}_n$ be a $\mathbb{Z}_m$-module and $d=p_1^{r_1}\cdots p_s^{r_s}(\neq 1,m)$ be a divisor of $m$. We set $D_d=\{i\in S :  r_i< \beta_i \}$. Clearly, $D_d\subseteq S'$. Suppose that $W$ is a clique of $G_n(\mathbb{Z}_m)$. Then $\Gamma_W=\{D_d : d\mathbb{Z}_m\in W \}$ is an intersecting family of subsets of $S'$. (A family of sets is intersecting if any two of its sets have a non-empty intersection.) Also, if $\Gamma$ is an intersecting family of subsets of $S'$ and $W_\Gamma=\{ d\mathbb{Z}_m : d\neq 1,m, \ d|m, \ D_d\in \Gamma \}$ is non-empty, then $W_\Gamma$ is a clique of
$G_n(\mathbb{Z}_m)$. (If $D$ is a non-empty subset of $S'$ and $\Gamma=\{D\}$, then we will denote $W_\Gamma$ by $W_D$.) Thus we have
\begin{center}
$\omega(G_n(\mathbb{Z}_m))=max \left\{ |W_\Gamma|  : \  \Gamma \ \hbox{is an intersecting family of subsets of} \ S' \right\}.$
\end{center}
}
\end{remark}

Now, we provide a lower bound for the clique number of $G_n(\mathbb{Z}_m)$.

\begin{thm}
\label{bound}
{ Let $\mathbb{Z}_n$ be a $\mathbb{Z}_m$-module. Then
\begin{center}
$\omega(G_n(\mathbb{Z}_m))\geq max \left\{\beta_j\prod_{i\neq j}(\alpha_i+1)-1 \   :  \  \beta_j\neq 0 \right\}.$
\end{center}
}
\end{thm}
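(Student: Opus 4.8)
The plan is to use the combinatorial description of cliques from Remark \ref{clique}: I would exhibit, for each index $j$ with $\beta_j\neq 0$, an explicit intersecting family $\Gamma$ of subsets of $S'$ for which $|W_\Gamma|=\beta_j\prod_{i\neq j}(\alpha_i+1)-1$, and then take the maximum over all such $j$. Since $n\geq 2$ forces $S'\neq\emptyset$, there is at least one admissible $j$, so the right-hand side is a maximum over a non-empty set.

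Fix $j\in S'$ and let $\Gamma_j=\{A\subseteq S' : j\in A\}$ be the family of all subsets of $S'$ containing $j$. Any two members of $\Gamma_j$ share the element $j$, so $\Gamma_j$ is intersecting. By Remark \ref{clique}, as long as $W_{\Gamma_j}$ is non-empty it is a clique of $G_n(\mathbb{Z}_m)$, whence $\omega(G_n(\mathbb{Z}_m))\geq |W_{\Gamma_j}|$; and if $W_{\Gamma_j}$ happens to be empty the inequality $\omega(G_n(\mathbb{Z}_m))\geq 0=|W_{\Gamma_j}|$ is trivial. So the whole argument reduces to counting $|W_{\Gamma_j}|$.

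To do this I would write a divisor $d$ of $m$ as $d=p_1^{r_1}\cdots p_s^{r_s}$ with $0\leq r_i\leq\alpha_i$, and observe that $D_d\in\Gamma_j$ precisely when $j\in D_d$, i.e. when $r_j<\beta_j$. The number of divisors $d$ of $m$ with $r_j<\beta_j$ equals $\beta_j\prod_{i\neq j}(\alpha_i+1)$, since $r_j$ may take the $\beta_j$ values $0,1,\dots,\beta_j-1$ while each $r_i$ with $i\neq j$ ranges over $\alpha_i+1$ values. Among these divisors, $d=1$ is always present (here $r_j=0<\beta_j$), whereas $d=m$ is never present (here $r_j=\alpha_j\geq\beta_j$). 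As the vertices of $G_n(\mathbb{Z}_m)$ correspond to divisors $d\neq 1,m$ of $m$ (Remark \ref{zngzm}), removing the single divisor $d=1$ gives $|W_{\Gamma_j}|=\beta_j\prod_{i\neq j}(\alpha_i+1)-1$.

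Combining these, $\omega(G_n(\mathbb{Z}_m))\geq\beta_j\prod_{i\neq j}(\alpha_i+1)-1$ for every $j$ with $\beta_j\neq 0$, and taking the maximum over such $j$ yields the claim. I do not anticipate a genuine obstacle here; the only delicate point is the bookkeeping of the two extreme divisors $1$ and $m$, and verifying that $1$ is always counted by the condition $r_j<\beta_j$ while $m$ never is (using $\beta_j\leq\alpha_j$) is the one place where an off-by-one slip could occur.
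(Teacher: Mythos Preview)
Your proposal is correct and follows exactly the same approach as the paper: both fix $j\in S'$, take the intersecting family $\Gamma_j=\{D\subseteq S':j\in D\}$, and count $|W_{\Gamma_j}|=\beta_j\prod_{i\neq j}(\alpha_i+1)-1$. Your version is simply more explicit about the counting and the exclusion of $d=1$ and $d=m$, which the paper dismisses with a ``Clearly''.
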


\begin{proof}
{ Suppose that $\beta_j\neq 0$. With the notations of the previous remark, let $\Gamma=\{ D\subseteq S' : j\in D \}$. Then
$\Gamma$ is an intersecting family of subsets of $S'$ and so $W_\Gamma$
is a clique of $G_n(\mathbb{Z}_m)$. Clearly, $|W_\Gamma|=\beta_j\prod_{i\neq j}(\alpha_i+1)-1$.
Therefore $\omega(G_n(\mathbb{Z}_m))\geq \beta_j\prod_{i\neq j}(\alpha_i+1)-1$ and hence the result holds.
}
\end{proof}

Clearly, if $n=p_1^{\beta_1}$ $(\beta_1>1)$, then equality holds in the previous theorem. Also, if $n$ has only two distinct prime divisors, that is, $s'=2$, then again equality holds. So the lower bound is sharp.

\begin{exa}
{\rm
 Let $m=n=p_1^2p_2^2p_3^2$, where $p_1, p_2, p_3$ are distinct primes. Thus $S'=S=\{1,2,3\}$ and $G_n(\mathbb{Z}_m)=G(\mathbb{Z}_m)$. It is easy to see that $|W_{\{1\}}|=|W_{\{2\}}|=|W_{\{3\}}|=2$ and $|W_{\{1,2\}}|=|W_{\{1,3\}}|=|W_{\{2,3\}}|=4$. Also, $|W_{\{1,2,3\}}|=7$. Let $\Gamma_j=\{ D\subseteq S' : j\in D \}$, for $j=1,2,3$. Hence $|W_{\Gamma_j}|=17$, for $j=1,2,3$. If $\Gamma=\left\{\{1,2\}, \{1,3\}, \{2,3\}, \{1,2,3\}\right\}$, then $|W_\Gamma|=19$. Therefore $\omega(G(\mathbb{Z}_m))=19$.
}
\end{exa}

By the strong perfect graph theorem, we determine the values of $n$ and $m$ for which $G_n(\mathbb{Z}_m)$ is a perfect graph.

\begin{pro}
\label{Berge}
{\rm (The Strong Perfect Graph Theorem \cite{strong}) A finite graph $G$ is perfect if and only if neither $G$ nor $\overline{G}$ contains an induced odd cycle of length at least $5$.}
\end{pro}

\begin{thm}
Let $\mathbb{Z}_n$ be a $\mathbb{Z}_m$-module. Then $G_n(\mathbb{Z}_m)$ is perfect if and only if $n$ has at most four distinct prime divisors.
\end{thm}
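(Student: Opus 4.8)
The plan is to apply the Strong Perfect Graph Theorem (Theorem~\ref{Berge}) to $G = G_n(\mathbb{Z}_m)$, so I must show that $n$ has at most four distinct prime divisors exactly when neither $G$ nor $\overline{G}$ contains an induced odd hole of length $\geq 5$. Recall from Remark~\ref{zngzm} and Remark~\ref{clique} that the non-isolated vertices of $G$ correspond to divisors $d \neq 1,m$ of $m$ with $D_d \neq \emptyset$, and two such vertices $d_1\mathbb{Z}_m, d_2\mathbb{Z}_m$ are adjacent iff $D_{d_1} \cap D_{d_2} \neq \emptyset$ (equivalently $n \nmid [d_1,d_2]$). Since isolated vertices cannot lie on any induced cycle or on any induced cycle of the complement of length $\geq 5$ when combined with non-adjacent vertices (an isolated vertex of $G$ is adjacent in $\overline{G}$ to everything, hence can only sit in an induced subgraph of $\overline G$ as a dominating vertex, which no odd hole has), I may restrict attention to the subgraph induced on non-isolated vertices; its adjacency is governed purely by the sets $D_d \subseteq S'$, where $|S'| = s'$ is the number of distinct prime divisors of $n$.

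For the ``if'' direction, assume $s' \leq 4$. I would argue that $G$ restricted to non-isolated vertices has no induced odd cycle of length $\geq 5$: in such a cycle $d_1 - d_2 - \cdots - d_k - d_1$ we need $D_{d_i} \cap D_{d_{i+1}} \neq \emptyset$ but $D_{d_i} \cap D_{d_j} = \emptyset$ for non-consecutive indices. Counting incidences, each element of $S'$ can ``cover'' at most one edge of the cycle in the sense that if $j \in D_{d_i} \cap D_{d_{i+1}}$ then $j$ lies in no other $D_{d_\ell}$ along the cycle (else a chord appears); hence $k \leq |S'| = s' \leq 4 < 5$, so no odd hole of length $\geq 5$ exists. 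For $\overline{G}$: a hole in $\overline{G}$ is an ``anti-hole'' in $G$, i.e. vertices $d_1, \dots, d_k$ with $D_{d_i} \cap D_{d_j} \neq \emptyset$ precisely when $i,j$ are \emph{non}-consecutive. I would rule this out for $k = 5,7$ by a direct combinatorial / counting argument on subsets of a ground set of size $\leq 4$ — for $k=5$ the five pairwise non-consecutive pairs form a $5$-cycle's complement, itself a $5$-cycle, so a $C_5$ in $G$ with this incidence pattern would again need $5$ ``private'' ground-set elements; for longer anti-holes the number of required pairwise intersections grows and the bound $s' \leq 4$ kills them. (One also checks isolated vertices contribute nothing, as noted.)

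For the ``only if'' direction, assume $s' \geq 5$, so $S'$ contains at least five indices, say $1,2,3,4,5$. I would exhibit an induced $C_5$ in $G$ by choosing five divisors $d_1,\dots,d_5$ of $m$ (each $\neq 1,m$) with $D_{d_i}$ prescribed to be, cyclically, $\{1\},\{1,2\}$-type pairs arranged so that $D_{d_i}\cap D_{d_{i+1}}\neq\emptyset$ and $D_{d_i}\cap D_{d_j}=\emptyset$ otherwise: the classical choice $D_{d_1}=\{1\}$, $D_{d_2}=\{2\}$, \dots is too weak (those are pairwise disjoint), so instead take $D_{d_1}=\{1\}$? — rather, use the standard edge-labeling of $C_5$: assign to the five edges of $C_5$ the five indices and let $D_{d_i}$ be the two-element set of indices of the two edges incident to vertex $i$. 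Then consecutive $D$'s share the common edge-index and non-consecutive ones are disjoint, giving an induced $C_5$. I must check realizability: a divisor $d$ with a given $D_d \subseteq S'$ exists as long as we can pick exponents $r_i < \beta_i$ for $i \in D_d$ and $\beta_i \le r_i \le \alpha_i$ for $i \notin D_d$, which is possible since $\beta_i \geq 1$ for $i \in S'$ and $\beta_i \le \alpha_i$ always; one only needs $d \neq 1$ (automatic, as $D_d\neq\emptyset$ forces some $r_i<\beta_i$, but we also need $d\neq m$, easily arranged since $s\ge 5>1$). This produces the forbidden induced odd cycle, so $G$ is not perfect.

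\textbf{Main obstacle.} The delicate part is the anti-hole analysis in $\overline{G}$ for the ``if'' direction: unlike the hole case, where the linear bound $k \le s'$ is immediate, bounding anti-hole length in terms of $s'$ requires more careful counting of how the prescribed pairwise intersections can be packed into a ground set of size $\le 4$, and one must handle $\overline{C_5}\cong C_5$ and $\overline{C_7}$ (and in principle all odd $k\ge 5$) separately; I also need to confirm rigorously that isolated vertices of $G$ never participate in an induced odd hole of $\overline G$, which follows since such a vertex is universal in $\overline G$ and odd holes have no universal vertex.
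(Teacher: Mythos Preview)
Your overall strategy---apply the Strong Perfect Graph Theorem and analyze adjacency via the sets $D_d \subseteq S'$---is exactly the paper's. Your construction of an induced $C_5$ when $s' \geq 5$ (the edge-labeling of $C_5$) is identical to the paper's, and your argument that an induced cycle of length $k$ in $G$ forces $k \leq s'$ (each element of $S'$ witnesses at most one edge of the cycle, else a chord appears) is a clean direct version of what the paper does contrapositively in its Cases~1 and~2. Your handling of isolated vertices is correct and in fact more explicit than the paper's.

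The genuine gap is exactly where you flag it: the anti-hole case. Saying ``the number of required pairwise intersections grows and the bound $s'\leq 4$ kills them'' is not an argument, and a naive intersection count does not obviously succeed for $\overline{C_7}$ the way your edge-count does for holes in $G$. The paper sidesteps an open-ended case analysis with a single reduction: any induced odd cycle of length $\geq 7$ in $\overline{G}$ contains an induced path on six vertices $d_1,\ldots,d_6$ in $\overline{G}$. Reading this $P_6$ back in $G$ gives $D_{d_1}\cap D_{d_3}\neq\varnothing$, $D_{d_1}\cap D_{d_4}\neq\varnothing$, $D_{d_3}\cap D_{d_4}=\varnothing$, so $D_{d_1}$ contains two distinct primes $p_1,p_2$ (with $p_1\in D_{d_3}$, $p_2\in D_{d_4}$); by the symmetric argument on $d_2,d_4,d_5$ together with $D_{d_1}\cap D_{d_2}=\varnothing$, $D_{d_2}$ contains two further primes $p_3,p_4$; finally $D_{d_3}\cap D_{d_5}\neq\varnothing$, $D_{d_3}\cap D_{d_6}\neq\varnothing$, $D_{d_5}\cap D_{d_6}=\varnothing$ force $|D_{d_3}|\geq 2$, and since $D_{d_3}$ is disjoint from $D_{d_2}\cup D_{d_4}\supseteq\{p_2,p_3,p_4\}$ it must contain a fifth prime beyond $p_1$. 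This targeted $P_6$ analysis is the missing ingredient you need to complete the proof.
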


\begin{proof}
{First suppose that $s'\geq5$ and $n=p_1^{\beta_1}\cdots p_{s'}^{\beta_{s'}}$, where $p_i$'s are distinct primes and $\beta_i$'s are positive integers.
Let $D_1=\{p_1,p_5\}$, $D_2=\{p_1,p_2\}$, $D_3=\{p_2,p_3\}$, $D_4=\{p_3,p_4\}$, and $D_5=\{p_4,p_5\}$. Now, assume that $d_i\mathbb{Z}_m\in W_{D_i}$, for $i=1,\ldots,5$. Hence $d_1\mathbb{Z}_m$ --- $d_2\mathbb{Z}_m$ --- $d_3\mathbb{Z}_m$ --- $d_4\mathbb{Z}_m$ --- $d_5\mathbb{Z}_m$ --- $d_1\mathbb{Z}_m$ is an induced cycle of length 5 in $G_n(\mathbb{Z}_m)$. So by Theorem \ref{Berge}, $G_n(\mathbb{Z}_m)$ is not a perfect graph.\\
\indent Conversely, suppose that $G_n(\mathbb{Z}_m)$ is not a perfect graph. Then by Theorem \ref{Berge}, we have the following cases:\\
\indent Case 1. $d_1\mathbb{Z}_m$ --- $d_2\mathbb{Z}_m$ --- $d_3\mathbb{Z}_m$ --- $d_4\mathbb{Z}_m$ --- $d_5\mathbb{Z}_m$ --- $d_1\mathbb{Z}_m$ is an induced cycle of length 5 in $G_n(\mathbb{Z}_m)$. Let $D_i=D_{d_i}$, for $i=1,\ldots,5$. So $D_5\cap D_1\neq \varnothing$ and $D_i\cap D_{i+1}\neq \varnothing$, for $i=1,\ldots,4$. Let $p_{5}\in D_5\cap D_1$ and $p_{i}\in D_i\cap D_{i+1}$, for $i=1,\ldots,4$. Clearly, $p_1,\ldots,p_5$ are distinct and thus $s'\geq 5$.\\
\indent Case 2. $d_1\mathbb{Z}_m$ --- $d_2\mathbb{Z}_m$ --- $d_3\mathbb{Z}_m$ --- $d_4\mathbb{Z}_m$ --- $d_5\mathbb{Z}_m$ --- $d_6\mathbb{Z}_m$ is an induced path of length 5 in $G_n(\mathbb{Z}_m)$. Let $D_i=D_{d_i}$, for $i=1,\ldots,6$. So $D_i\cap D_{i+1}\neq \varnothing$, for $i=1,\ldots,5$. Let $p_i\in D_i\cap D_{i+1}$, for $i=1,\ldots,5$. Clearly, $p_1,\ldots,p_5$ are distinct and hence $s'\geq 5$.\\
\indent Case 3. There is an induced cycle of length 5 in $\overline{G_n(\mathbb{Z}_m)}$. So $G_n(\mathbb{Z}_m)$ contains an induced cycle of length 5 and by Case 1, we are done.\\
\indent Case 4. $d_1\mathbb{Z}_m$ --- $d_2\mathbb{Z}_m$ --- $d_3\mathbb{Z}_m$ --- $d_4\mathbb{Z}_m$ --- $d_5\mathbb{Z}_m$ --- $d_6\mathbb{Z}_m$ is an induced path of length 5 in $\overline{G_n(\mathbb{Z}_m)}$. Since $D_{d_1}\cap D_{d_3}\neq \varnothing$, $D_{d_1}\cap D_{d_4}\neq \varnothing$ and $D_{d_3}\cap D_{d_4}=\varnothing$, we may assume that $\{p_1,p_2\}\subseteq D_{d_1}$, where $p_1\in D_{d_3}$ and $p_2\in D_{d_4}$, for some distinct $p_1,p_2\in S'$. Similarly, we find that $\{p_3,p_4\}\subseteq D_{d_2}$, for some distinct $p_3,p_4\in S'\setminus \{p_1,p_2\}$ and also $|D_{d_3}|\geq 2$. Now, since $D_{d_3}\cap D_{d_2}=\varnothing$ and $p_2\notin D_{d_3}$, we deduce that $s'\geq 5$.
}
\end{proof}

\begin{cor}
{The graph $G(\mathbb{Z}_m)$ is perfect if and only if $m$ has at most four distinct prime divisors.
}
\end{cor}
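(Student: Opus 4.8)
The plan is to deduce this corollary directly from the preceding theorem about $G_n(\mathbb{Z}_m)$ by taking $n=m$. First I would recall that when $\mathbb{Z}_m$ is regarded as a module over itself, the $\mathbb{Z}_m$-intersection graph of ideals of $\mathbb{Z}_m$ coincides with the ordinary intersection graph $G(\mathbb{Z}_m)$; this is exactly the observation made right after the Definition (with $M=R$), so $G(\mathbb{Z}_m)=G_m(\mathbb{Z}_m)$.

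Next, I would apply the theorem to the special case $n=m$. In the notation fixed at the start of this section, writing $m=p_1^{\alpha_1}\cdots p_s^{\alpha_s}$, the choice $n=m$ forces $\beta_i=\alpha_i$ for all $i$, so every $\beta_i$ is positive and hence $S'=S$, giving $s'=s$. Thus "$n$ has at most four distinct prime divisors" becomes "$m$ has at most four distinct prime divisors", i.e. $s\leq 4$.

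Combining these two observations: $G(\mathbb{Z}_m)=G_m(\mathbb{Z}_m)$ is perfect if and only if $G_m(\mathbb{Z}_m)$ is perfect, which by the theorem holds if and only if $m$ (as a $\mathbb{Z}_m$-module, equivalently the module $\mathbb{Z}_n$ with $n=m$) has at most four distinct prime divisors. Since $m$ and $n=m$ have the same prime divisors, this is precisely the assertion of the corollary.

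Honestly, there is no real obstacle here; the only thing to be careful about is confirming that the hypothesis "$\mathbb{Z}_n$ is a $\mathbb{Z}_m$-module" (i.e. $n\mid m$) is trivially satisfied when $n=m$, so the theorem genuinely applies, and that the reduction of $s'$ to $s$ under $n=m$ is stated cleanly. One could phrase the whole argument in a single sentence, but it is worth spelling out the identification $G(\mathbb{Z}_m)=G_m(\mathbb{Z}_m)$ explicitly so the reader sees why this is a corollary rather than an independent result.
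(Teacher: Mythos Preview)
Your proposal is correct and follows exactly the intended route: the paper states this as an immediate corollary of the preceding theorem, and the only step is the specialization $n=m$, which gives $G(\mathbb{Z}_m)=G_m(\mathbb{Z}_m)$ and $s'=s$.
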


In the next theorem, we derive a sufficient condition for $G_n(\mathbb{Z}_m)$ to be weakly perfect.

\begin{thm}
\label{perfect}
{Let $\mathbb{Z}_n$ be a $\mathbb{Z}_m$-module. If $\alpha_i\leq 2\beta_i-1$ for each $i\in S'$, then $G_n(\mathbb{Z}_m)$ is weakly perfect.
}
\end{thm}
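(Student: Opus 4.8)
The plan is to present $G_n(\mathbb{Z}_m)$ as a clique blow-up of a small graph, to read off its clique number from the hypothesis, and then to exhibit a proper colouring with exactly $\omega$ colours; since $\chi\ge\omega$ always holds, this yields weak perfectness. We may assume $G_n(\mathbb{Z}_m)$ has an edge (otherwise $\chi=\omega\le 1$), after which isolated vertices are irrelevant to both parameters and I ignore them. For a non-empty $D\subseteq S'$ let $V_D$ be the set of vertices $d\mathbb{Z}_m$ of $G_n(\mathbb{Z}_m)$ with $D_d=D$ (notation of Remark~\ref{clique}). Any two such vertices are adjacent, since $D_d\cap D_{d'}=D\ne\varnothing$, so $V_D$ is a clique; counting the divisors $d$ of $m$ with $1<d<m$ and $D_d=D$ gives $|V_D|=N(D)$ when $D\ne S'$ and $|V_{S'}|=N(S')-1$, where
\[
N(D)=\left(\prod_{i\in D}\beta_i\right)\left(\prod_{i\in S'\setminus D}(\alpha_i-\beta_i+1)\right)\left(\prod_{i\notin S'}(\alpha_i+1)\right).
\]
Moreover $u\in V_{D_1}$ and $v\in V_{D_2}$ with $D_1\ne D_2$ are adjacent if and only if $D_1\cap D_2\ne\varnothing$, so $G_n(\mathbb{Z}_m)$, with its isolated vertices removed, is the clique blow-up of the intersection graph $H$ of the non-empty subsets of $S'$, in which $D$ is blown up to a clique on $|V_D|$ vertices.

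Next I would compute $\omega$. The hypothesis $\alpha_i\le 2\beta_i-1$ says precisely that $\beta_i\ge\alpha_i-\beta_i+1$ for each $i\in S'$, so $N$ is monotone: $D_1\subseteq D_2$ implies $N(D_1)\le N(D_2)$. Put $w_i=\log\bigl(\beta_i/(\alpha_i-\beta_i+1)\bigr)\ge 0$ and $w(D)=\sum_{i\in D}w_i$, so that $N(D)\ge N(D^c)$ exactly when $w(D)\ge w(S')/2$, where $D^c:=S'\setminus D$. Let $\Gamma^{*}$ be the family consisting of $S'$ together with, for every complementary pair $\{D,D^c\}$ of non-empty proper subsets of $S'$, the set with the larger value of $N$, a tie being broken in favour of the set that contains $\min S'$. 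Monotonicity makes $\Gamma^{*}$ an up-set, and $\Gamma^{*}$ is intersecting: if $A,B\in\Gamma^{*}$ were disjoint then $B\subseteq A^c$ would force $w(B)\le w(S')-w(A)\le w(S')/2$, with equality throughout only in the tie case, where $\min S'\in A\cap B$, a contradiction. Since every intersecting family of subsets of $S'$ extends to a maximal one, and a maximal intersecting family contains $S'$ and exactly one member of each pair $\{D,D^c\}$ whereas $\Gamma^{*}$ keeps the $N$-heavier member of every pair, Remark~\ref{clique} gives $\omega(G_n(\mathbb{Z}_m))=\sum_{D\in\Gamma^{*}}|V_D|$ (in particular Theorem~\ref{bound} is sharp whenever $\Gamma^{*}$ is a star).

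For the colouring, partition a palette of $\omega=\sum_{D\in\Gamma^{*}}|V_D|$ colours as $\bigsqcup_{D\in\Gamma^{*}}C_D$ with $|C_D|=|V_D|$, and colour each clique $V_D$ with $D\in\Gamma^{*}$ bijectively by $C_D$. If $D$ is non-empty and $D\notin\Gamma^{*}$, then $D^c\in\Gamma^{*}$ and $|V_D|=N(D)\le N(D^c)=|V_{D^c}|=|C_{D^c}|$, so I colour $V_D$ injectively using colours from $C_{D^c}$; the isolated vertices are placed in one existing colour class. This colouring is proper: two vertices of the same $V_D$ get distinct colours by construction, and for adjacent $u\in V_{D_1}$ and $v\in V_{D_2}$ with $D_1\ne D_2$ the colour of $u$ lies in $C_{E_1}$ and that of $v$ in $C_{E_2}$ for some $E_1\in\{D_1,D_1^c\}$ and $E_2\in\{D_2,D_2^c\}$, both lying in $\Gamma^{*}$; an equality $E_1=E_2$ would force $D_1=D_2$ or $D_1=D_2^c$, the latter contradicting $D_1\cap D_2\ne\varnothing$, so $E_1\ne E_2$ and hence $C_{E_1}\cap C_{E_2}=\varnothing$. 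Therefore $\chi(G_n(\mathbb{Z}_m))\le\omega(G_n(\mathbb{Z}_m))$, and $G_n(\mathbb{Z}_m)$ is weakly perfect.

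I expect the main obstacle to be the computation of $\omega$, namely verifying that the rule ``keep the $N$-heavier set of each complementary pair'' actually produces an intersecting family and is optimal among all cliques of $G_n(\mathbb{Z}_m)$. This is exactly the place where monotonicity of $N$, equivalently the hypothesis $\alpha_i\le 2\beta_i-1$, is indispensable, and where the tie case of the rule must be handled by a fixed convention to keep the family intersecting. Once $\omega=\sum_{D\in\Gamma^{*}}|V_D|$ has been pinned down, the colouring is essentially forced, since a block $V_D$ with $D\notin\Gamma^{*}$ has the single palette block $C_{D^c}$ available to it.
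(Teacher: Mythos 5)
Your proposal is correct and follows essentially the same route as the paper: the same decomposition of the vertices into blocks $V_D=W_D$ indexed by $D_d$, the same use of the hypothesis $\alpha_i\le 2\beta_i-1$ to get monotonicity of $|W_D|$ under inclusion, the same identification of the optimal clique with an intersecting family containing the heavier member of each complementary pair, and the identical colouring that reuses the palette of $W_{\overline{D}}$ for each block $W_D$ outside the family. The only difference is organizational: you construct the extremal family $\Gamma^{*}$ explicitly (via the weight function and a tie-breaking rule) and compare it with maximal intersecting families, whereas the paper starts from an optimal intersecting family and derives the same structure by a swap argument.
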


\begin{proof}
{Let $D$ be a non-empty subset of $S'$ and $\overline{D}=S'\setminus D$. As we mentioned in Remark \ref{clique}, if $W_D$ is non-empty, then $W_D$ is a clique of $G_n(\mathbb{Z}_m)$. Also, the vertices of $W_{S'}$ (if $W_{S'}\neq \varnothing$) are adjacent to all non-isolated vertices. Suppose that $D_1$ and $D_2$ are two non-empty subsets of $S'$ and $D_1\subseteq D_2$. Since $\alpha_i\leq 2\beta_i-1$ for each $i\in S'$, so $\prod_{i\in D_2\setminus D_1}(\alpha_i-\beta_i+1)\leq \prod_{i\in D_2\setminus D_1}\beta_i$. This implies that $\prod_{i\in D_1}\beta_i\prod_{i\notin D_1}(\alpha_i-\beta_i+1)\leq \prod_{i\in D_2}\beta_i\prod_{i\notin D_2}(\alpha_i-\beta_i+1)$ and hence $|W_{D_1}|\leq |W_{D_2}|$. \\
\indent Let $\Gamma$ be an intersecting family of subsets of $S'$ and $\omega(G_n(\mathbb{Z}_m))=|W_\Gamma|$.
Let $D\subseteq S'$. We show that $D\in \Gamma$ or $\overline{D}\in \Gamma$. Assume that $D\notin \Gamma$. So there is $D_1\in \Gamma$ such that $D\cap D_1=\varnothing$. Thus $D_1\subseteq \overline{D}$ and hence $\overline{D}\in \Gamma$.
We claim that $|W_{\overline{D}}|\leq |W_D|$, for each $D\in \Gamma$. Suppose to the contrary, $D\in \Gamma$ and $|W_{\overline{D}}|>|W_D|$. If $A\in \Gamma$ and $A\subseteq D$, then $\overline{D}\subseteq \overline{A}$. So we have $|W_A|\leq|W_D|<|W_{\overline{D}}|\leq|W_{\overline{A}}|$. Let $\Phi=\Gamma \cup\{\overline{A} : A\in \Gamma, A\subseteq D\}\setminus \{A\in \Gamma : A\subseteq D\}$. Then $\Phi$ is an intersecting family of subsets of $S'$ and $|W_\Gamma|<|W_\Phi|$, a contradiction. The claim is proved.\\
\indent Now, we show that $G_n(\mathbb{Z}_m)$ has a proper $|W_\Gamma|$-vertex coloring. First we color all vertices of $W_\Gamma$ with different colors.
Next we color each family $W_D$ of vertices out of $W_\Gamma$ with colors of vertices of $W_{\overline{D}}$. Note that if $D\notin \Gamma$, then $\overline{D}\in \Gamma$ and $|W_D|\leq |W_{\overline{D}}|$. Suppose that $d_1\mathbb{Z}_m$ and $d_2\mathbb{Z}_m$ are two adjacent vertices of $G_n(\mathbb{Z}_m)$. Thus $D_{d_1}\cap D_{d_2}\neq \varnothing$.
Without loss of generality, one can assume
$D_{d_1}\neq D_{d_2}$. So we deduce that $\overline{D_{d_1}}\neq \overline{D_{d_2}}$ and $D_{d_1}\neq \overline{D_{d_2}}$.
Therefore, $d_1\mathbb{Z}_m$ and $d_2\mathbb{Z}_m$ have different colors. Thus $\chi(G_n(\mathbb{Z}_m))\leq |W_\Gamma|$ and hence $\omega(G_n(\mathbb{Z}_m))=\chi(G_n(\mathbb{Z}_m))=|W_\Gamma|$.
}
\end{proof}

As an immediate consequence of the previous theorem, we have the
next result.

\begin{cor}
{The graph $G(\mathbb{Z}_m)$ is weakly perfect, for every integer $m\geq 2$.
}
\end{cor}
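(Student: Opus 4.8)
The plan is to recognize $G(\mathbb{Z}_m)$ as a special instance of the family $G_n(\mathbb{Z}_m)$ and then invoke Theorem \ref{perfect}. Indeed, when $\mathbb{Z}_m$ is regarded as a module over itself we have $G(\mathbb{Z}_m)=G_m(\mathbb{Z}_m)$, which is precisely the case $n=m$ of the construction of this section. With the notation fixed at the beginning of the section, $m=p_1^{\alpha_1}\cdots p_s^{\alpha_s}$ and $n=p_1^{\beta_1}\cdots p_s^{\beta_s}$, so taking $n=m$ forces $\beta_i=\alpha_i$ for every $i\in S$; in particular $S'=S$, since each $\alpha_i$ is positive.

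It then remains only to check the hypothesis of Theorem \ref{perfect}, namely that $\alpha_i\le 2\beta_i-1$ for each $i\in S'$. Substituting $\beta_i=\alpha_i$, this inequality becomes $\alpha_i\le 2\alpha_i-1$, i.e.\ $1\le\alpha_i$, which holds trivially because each $\alpha_i$ is a positive integer. Hence Theorem \ref{perfect} applies and yields $\chi(G(\mathbb{Z}_m))=\omega(G(\mathbb{Z}_m))$, that is, $G(\mathbb{Z}_m)$ is weakly perfect, for every integer $m\ge 2$.

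Since the argument is a pure specialization, there is essentially no obstacle to overcome; the only point requiring a moment of care is to confirm that the boundary situations of the exponents (every $\alpha_i=1$, and the degenerate possibility $s=1$) are genuinely covered by the inequality $\alpha_i\ge 1$, which they are. If one wished, one could instead trace through the explicit $\lvert W_\Gamma\rvert$-colouring constructed in the proof of Theorem \ref{perfect} and observe that for $n=m$ it specializes to a proper colouring of $G(\mathbb{Z}_m)$ using exactly $\omega(G(\mathbb{Z}_m))$ colours, but this adds nothing beyond the direct application of the theorem.
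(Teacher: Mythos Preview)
Your proof is correct and matches the paper's approach: the corollary is stated there as an immediate consequence of Theorem~\ref{perfect}, and you have simply spelled out that specialization by taking $n=m$ (so $\beta_i=\alpha_i$) and verifying the hypothesis $\alpha_i\le 2\beta_i-1$ reduces to $\alpha_i\ge 1$.
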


In the case that $\alpha_i=2\beta_i-1$ for each $i\in S'$, we determine the exact value of $\chi(G_n(\mathbb{Z}_m))$. It is exactly the lower bound obtained in the Theorem \ref{bound}.

\begin{thm}
{Let $\mathbb{Z}_n$ be a $\mathbb{Z}_m$-module. If $\alpha_i=2\beta_i-1$ for each $i\in S'$, then $\omega(G_n(\mathbb{Z}_m))=\chi(G_n(\mathbb{Z}_m))=2^{s'-1}\prod_{i\in S'}\beta_i\prod_{i\in S\setminus S'}(\alpha_i+1)-1$.
}
\end{thm}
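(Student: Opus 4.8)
The plan is to combine Theorem \ref{perfect} with the clique description in Remark \ref{clique}. Since the hypothesis $\alpha_i = 2\beta_i - 1$ for $i \in S'$ in particular gives $\alpha_i \le 2\beta_i - 1$, Theorem \ref{perfect} applies and yields $\chi(G_n(\mathbb{Z}_m)) = \omega(G_n(\mathbb{Z}_m))$; so the whole problem reduces to evaluating $\omega(G_n(\mathbb{Z}_m))$, and by Remark \ref{clique} this means maximising $|W_\Gamma|$ over all intersecting families $\Gamma$ of subsets of $S'$.

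The crux is a counting step. For a fixed $D \subseteq S'$ I would count the divisors $d = p_1^{r_1}\cdots p_s^{r_s}$ of $m$ with $D_d = D$ (recall $D_d = \{i \in S : r_i < \beta_i\}\subseteq S'$): such $d$ are obtained by picking $r_i \in \{0,\dots,\beta_i - 1\}$ for $i \in D$, $r_i \in \{\beta_i,\dots,\alpha_i\}$ for $i \in S'\setminus D$, and $r_i \in \{0,\dots,\alpha_i\}$ for $i \in S\setminus S'$, so their number is $\prod_{i\in D}\beta_i\prod_{i\in S'\setminus D}(\alpha_i-\beta_i+1)\prod_{i\in S\setminus S'}(\alpha_i+1)$. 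The point of the hypothesis is that $\alpha_i - \beta_i + 1 = \beta_i$, so this count collapses to $N := \prod_{i\in S'}\beta_i\prod_{i\in S\setminus S'}(\alpha_i+1)$, the same value for every $D$. Since the only divisors of $m$ never admitted to $W_\Gamma$ are $d = 1$ (with $D_1 = S'$) and $d = m$ (with $D_m = \varnothing$), and since $\varnothing$ lies in no non-degenerate intersecting family, this gives $|W_\Gamma| = |\Gamma|\,N$ when $S'\notin\Gamma$ and $|W_\Gamma| = |\Gamma|\,N - 1$ when $S'\in\Gamma$ (the degenerate case $\Gamma = \{\varnothing\}$ giving the dominated value $N-1$).

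Finally I would maximise $|W_\Gamma|$ using the standard bound on intersecting families: grouping the subsets of $S'$ into the $2^{s'-1}$ complementary pairs $\{D, S'\setminus D\}$, any intersecting $\Gamma$ meets each pair in at most one set, so $|\Gamma| \le 2^{s'-1}$; moreover, if $|\Gamma| = 2^{s'-1}$ then $\Gamma$ picks exactly one set from each pair, and from the pair $\{\varnothing, S'\}$ it must pick $S'$, so $S'\in\Gamma$ and $|W_\Gamma| = 2^{s'-1}N - 1$ — attained, for instance, by the star $\Gamma = \{D\subseteq S' : j\in D\}$ for any $j\in S'$. If instead $|\Gamma|\le 2^{s'-1}-1$, then $|W_\Gamma|\le |\Gamma|\,N \le (2^{s'-1}-1)N \le 2^{s'-1}N - 1$, using $N\ge 1$. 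Hence $\omega(G_n(\mathbb{Z}_m)) = 2^{s'-1}N - 1 = 2^{s'-1}\prod_{i\in S'}\beta_i\prod_{i\in S\setminus S'}(\alpha_i+1) - 1$, and together with $\chi = \omega$ this is exactly the assertion. I expect the only genuinely delicate point to be the bookkeeping of the two excluded trivial ideals $\mathbb{Z}_m$ and $(0)$ (i.e.\ $d=1$ and $d=m$) together with the degenerate family $\{\varnothing\}$; everything else is the collapse $\alpha_i - \beta_i + 1 = \beta_i$ and the textbook fact that a maximum intersecting family on $s'$ points has $2^{s'-1}$ members and necessarily contains the full set $S'$.
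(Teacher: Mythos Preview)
Your proposal is correct and follows essentially the same route as the paper: invoke Theorem \ref{perfect} to get $\chi=\omega$, use the hypothesis $\alpha_i-\beta_i+1=\beta_i$ to make all the block counts $|W_D|$ collapse to the common value $N$, and combine this with the bound $|\Gamma|\le 2^{s'-1}$ for intersecting families, attained by the star $\Gamma_j$. Your write-up is in fact more careful than the paper's in the optimisation step---you explicitly argue that $|\Gamma|=2^{s'-1}$ forces $S'\in\Gamma$ (hence the ``$-1$'') and separately bound the case $|\Gamma|\le 2^{s'-1}-1$, whereas the paper leaves this implicit.
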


\begin{proof}
{Let $D\neq \varnothing$ be a proper subset of $S'$. Then $|W_D|=\prod_{i\in D}\beta_i\prod_{i\notin D}(\alpha_i-\beta_i+1)=\prod_{i\in S'}\beta_i\prod_{i\in S\setminus S'}(\alpha_i+1)$ and hence $|W_D|=|W_{\overline{D}}|$. Also, the vertices of $W_{S'}$ (if $W_{S'}\neq \varnothing$) are adjacent to all non-isolated vertices and $|W_{S'}|=\prod_{i\in S'}\beta_i\prod_{i\in S\setminus S'}(\alpha_i+1)-1$. Clearly if $\Gamma$ is an intersecting family of subsets of $S'$, then $|\Gamma|\leq 2^{s'-1}$. Moreover, if $\beta_j\neq 0$ and $\Gamma_j=\{ D\subseteq S' : j\in D \}$, then $|\Gamma_j|=2^{s'-1}$. Thus by Theorem \ref{perfect}, $\omega(G_n(\mathbb{Z}_m))=\chi(G_n(\mathbb{Z}_m))=|W_{\Gamma_j}|=2^{s'-1}\prod_{i\in S'}\beta_i\prod_{i\in S\setminus S'}(\alpha_i+1)-1$.
}
\end{proof}

\begin{cor}
{Let $m=p_1\cdots p_s$, where $p_i$'s are distinct primes. Then $\omega(G(\mathbb{Z}_m))=\chi(G(\mathbb{Z}_m))=2^{s-1}-1$.
}
\end{cor}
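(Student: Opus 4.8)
The plan is to obtain this as a one-line specialization of the immediately preceding theorem, the one asserting that if $\alpha_i=2\beta_i-1$ for each $i\in S'$ then $\omega(G_n(\mathbb{Z}_m))=\chi(G_n(\mathbb{Z}_m))=2^{s'-1}\prod_{i\in S'}\beta_i\prod_{i\in S\setminus S'}(\alpha_i+1)-1$. First I would take $n=m$; since $\mathbb{Z}_m$ viewed as a module over itself is just the ring $\mathbb{Z}_m$, we have $G_n(\mathbb{Z}_m)=G_m(\mathbb{Z}_m)=G(\mathbb{Z}_m)$ by the observation that $G_R(R)=G(R)$. So it suffices to evaluate the formula above in this case.

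Next I would check the hypothesis. Writing $m=p_1\cdots p_s$ in the notation fixed for the section, every $\alpha_i$ equals $1$, and because $n=m$ every $\beta_i$ also equals $1$; hence $S'=S$, $s'=s$, and $\alpha_i=2\beta_i-1$ holds for each $i\in S'$ (both sides are $1$). Thus the theorem applies.

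Finally I would substitute: $\prod_{i\in S'}\beta_i$ is a product of $1$'s, hence $1$, and $\prod_{i\in S\setminus S'}(\alpha_i+1)$ is an empty product since $S\setminus S'=\varnothing$, hence also $1$; therefore $\omega(G(\mathbb{Z}_m))=\chi(G(\mathbb{Z}_m))=2^{s-1}\cdot 1\cdot 1-1=2^{s-1}-1$, as claimed. There is essentially no obstacle here: the only points that need a word of care are the identification $G_m(\mathbb{Z}_m)=G(\mathbb{Z}_m)$ and the convention that the empty product over $S\setminus S'$ equals $1$ (which also makes the formula correct in the degenerate case $s=1$, where $\mathbb{Z}_m$ is a field and the graph, having no vertices, indeed satisfies $\omega=\chi=0=2^{0}-1$).
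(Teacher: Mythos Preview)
Your proposal is correct and matches the paper's approach exactly: the corollary is stated without proof and is the immediate specialization of the preceding theorem to the case $n=m$ with all $\alpha_i=\beta_i=1$, precisely as you outline. Your remarks on the identification $G_m(\mathbb{Z}_m)=G(\mathbb{Z}_m)$ and the empty product are fine and the degenerate case $s=1$ is handled correctly.
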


We close this article by the following problem.

\noindent {\bf Problem.}
Let $\mathbb{Z}_n$ be a $\mathbb{Z}_m$-module. Then is it true that $G_n(\mathbb{Z}_m)$ is a weakly perfect graph?

\end{document}